\documentclass[11pt,twoside,a4paper]{amsart}
\usepackage{amssymb}
\usepackage{amsmath,amscd}
\usepackage[initials,nobysame,shortalphabetic]{amsrefs}
\usepackage[all,cmtip]{xy}

\usepackage[utf8]{inputenc}

\usepackage{geometry}
\geometry{head=8mm,foot=8mm,bindingoffset=12mm,vcentering=true,twoside=true,textheight=225mm,
  textwidth=141mm}

\def\End{{\rm End}}
\newcommand\cl[1]{\left[#1\right]}

\def\deg{\text{deg}\,}


\def\dbar{{\bar\partial}}
\def\C{{\mathbb C}}

\def\Np{{\mathbb N}}

\def\E{{\mathcal E}}
\def\Ext{{\mathcal Ext}}

\def\PM{{\mathcal{PM}}}
\def\Hom{{\rm Hom\, }}
\def\SHom{{\mathcal{H}om\, }}
\def\codim{{\rm codim\,}}

\def\Ok{{\mathcal O}}
\def\CH{\mathcal{CH}}
\DeclareMathOperator{\Id}{Id}
\DeclareMathOperator{\supp}{supp}

\DeclareMathOperator{\rank}{rank}
\DeclareMathOperator{\im}{im}
\DeclareMathOperator{\coker}{coker}
\DeclareMathOperator{\depth}{depth}

\DeclareMathOperator{\tr}{tr}
\DeclareMathOperator{\res}{res}

\def\be{\begin{equation}}
\def\ee{\end{equation}}

\newtheorem{thm}{Theorem}[section]
\newtheorem{lma}[thm]{Lemma}

\newtheorem{prop}[thm]{Proposition}

\theoremstyle{definition}

\newtheorem{df}[thm]{Definition}

\theoremstyle{remark}

\newtheorem{preremark}[thm]{Remark}
\newtheorem{preex}[thm]{Example}

\newenvironment{remark}{\begin{preremark}}{\end{preremark}}
\newenvironment{ex}{\begin{preex}}{\end{preex}}

\numberwithin{equation}{section}

\begin{document}

\title{Residue currents and fundamental cycles}

\date{\today}

\author{Richard L\"ark\"ang \& Elizabeth Wulcan}

\address{Mathematical Sciences\\Chalmers University of Technology and the University of Gothenburg\\S-412 96 
Gothenburg\\SWEDEN}

\email{larkang@chalmers.se, wulcan@chalmers.se}

\thanks{The authors were supported by the Swedish Research Council.}


\keywords{}

\begin{abstract}
    We give a factorization of the fundamental cycle of an analytic
    space in terms of certain differential
    forms and residue currents associated with a locally free resolution
    of its structure sheaf. Our result can be seen as a generalization of the classical 
    Poincar\'e-Lelong formula. 
    It is also a current version of a result by Lejeune-Jalabert,
    who similarly expressed the fundamental class of a
    Cohen-Macaulay analytic space in terms
    of differential forms and cohomological residues. 
\end{abstract}

\maketitle

\section{Introduction}

Given a holomorphic function $f$ on a complex manifold $X$, recall
that the classical \emph{Poincar\'e-Lelong formula} asserts that
$\dbar \partial \log |f|^2 = 2\pi i[Z]$, 
where $[Z]$ is the current of integration (or Lelong current) of the divisor $Z$ of $f$
counted with multiplicities,  
or, more precisely, (the current of integration of) 
the fundamental cycle of $Z$. 
Formally we can rewrite the Poincar\'e-Lelong formula 
as 
\begin{equation}\label{poin}
\frac{1}{2\pi i} \dbar\frac{1}{f} \wedge df = [Z]. 
\end{equation} 
This factorization of $[Z]$ 
can be made rigorous if we construe $\dbar(1/f)$
as the \emph{residue current} of $1/f$, 
introduced by Dolbeault, \cite{Dol}, and Herrera and Lieberman, \cite{HL}, 
and defined, e.g., as 
\begin{equation}\label{martes}
\lim_{\epsilon\to 0}\dbar\chi(|f|^2/\epsilon)\frac{1}{f},
\end{equation}
where $\chi(t)$ is (a smooth approximand of) the characteristic function of the
interval $[1,\infty)$. 
The current $\dbar(1/f)$ satisfies that a holomorphic
function $g$ on $X$ is in the ideal (sheaf) $\mathcal J (f)$ generated by $f$ if and only if $g\dbar
(1/f)=0$. This is referred to as the \emph{duality principle} and it is
central to many applications of residue currents; in a way
$\dbar(1/f)$ can be thought of as a current representation of the
ideal $\mathcal J (f)$. 
In this paper we prove that (the current of integration along) the fundamental cycle of any analytic
space admits a natural factorization as a smooth ``Jacobian'' 
factor times a residue current, analogous to \eqref{poin}. 

Let $Z\subset X$ be a (not necessarily reduced) analytic space. 
The \emph{fundamental cycle} of $Z$, seen as a current on $X$, is the current
\begin{equation} \label{eqfundcycle}
    [Z] = \sum m_i [Z_i],
\end{equation}
where $Z_i$ are the irreducible components of $Z_{\rm red}$, $[Z_i]$
are the currents of integration of the (reduced) subspaces $Z_i$,
and $m_i$ are the geometric multiplicities
of $Z_i$ in $Z$.

For a generic $z\in Z_i$, $\Ok_{Z,z}$ is a free
$\Ok_{Z_i,z}$-module of constant rank. One way of defining the \emph{geometric
multiplicity} $m_i$ of $Z_i$ in $Z$ is as this rank. 
Equivalently $m_i$ can be defined as the length of the Artinian ring $\Ok_{Z,Z_i}$, see, e.g.,
\cite{Fulton}*{Chapter~1.5}. The equivalence of the two definitions can be proved with the
help of \cite{Fulton}*{Lemma~1.7.2}. 
If $Z_{\rm red} = \{ z \}$ is a point, and $Z$ is defined by an ideal
sheaf $\mathcal{J}$, i.e., $\Ok_Z=\Ok_X/\mathcal J$, 
then the geometric multiplicity of ($Z_\text{red}$ in) $Z$ is $\dim_{\C} \Ok_{X,z}/\mathcal{J}_z$.
If $\dim Z_i > 0$, then for generic $z \in Z_i$ and $H \subset X$ a complex
manifold transversal to $(Z_i,z)$, $m_i = \dim_{\C} \Ok_{X,z}/(\mathcal{J} + \mathcal{J}_H)_z$,
where $\mathcal{J}_H$ is the ideal of holomorphic functions vanishing on $H$.

\smallskip 

We will consider $Z$ such that $\Ok_Z$ has a global locally free resolution over $\Ok_X$.
Such a resolution exists for any $Z$ for example when $X$ is projective. If $X$ is Stein, then any $Z$ has a semi-global resolution,
i.e., it has a free resolution on every compact in $X$. Assume that
\begin{equation}\label{turkos}
    0 \xrightarrow[]{} E_\nu \xrightarrow[]{\varphi_\nu} E_{\nu-1} \xrightarrow[]{\varphi_{\nu-1}}
    \cdots \xrightarrow[]{\varphi_2} E_1 \xrightarrow[]{\varphi_1} E_0,
\end{equation}
is this locally free resolution, i.e., \eqref{turkos} is an exact
complex of locally free 
$\Ok_X$-modules such that $\coker \varphi_1\cong\Ok_Z$. If the 
corresponding vector bundles are equipped with Hermitian metrics we
say that $(E, \varphi)$ is a \emph{Hermitian locally free resolution}
of $\Ok_Z$ over $\Ok_X$. Given such an $(E,\varphi)$, 
in \cite{AW1} Andersson and the second author
constructed an $\End E$-valued residue current $R^E = \sum R_k^E$, where $E=\bigoplus E_k$,
and $R_k^E$ takes values in $\Hom(E_0,E_k)$. This current satisfies a duality
principle and it has found many applications; e.g., it has
been used to obtain new results on the $\dbar$-equation on singular
varieties, \cite{AS}, and a global effective Brian\c con-Skoda-Huneke
theorem, \cite{AWSemester}.

If $f$ is a holomorphic function on $X$ 
and $E_0\cong\Ok_X$ and $E_1\cong \Ok_X$ are trivial line bundles, then 
\begin{equation*}
 0 \xrightarrow[]{} \Ok_X \xrightarrow[]{\varphi_1} \Ok_X,
\end{equation*}
where $\varphi_1$ is the $1\times 1$-matrix $[f]$, gives a locally free resolution of $\mathcal
O_Z:=\mathcal O/\mathcal J(f)$. 
In this case (the coefficient of) $R^E=R^E_1$ is just $\dbar (1/f)$, and
the Poincar\'e-Lelong formula 
\eqref{poin} can be written as\footnote{The relation between the signs
  in \eqref{poin} and \eqref{snodd} is explained in Section
  ~\ref{matrisnot}.}
\begin{equation}\label{snodd}
\frac{1}{2\pi i} d\varphi_1 R^E_1=[Z]. 
\end{equation} 

Our main result is the following generalization of \eqref{snodd}. 
\begin{thm} \label{thmmain}
    Let $Z \subset X$ be an analytic space of pure codimension $p$, let $(E,\varphi)$
    be a Hermitian locally free resolution of $\Ok_Z$ over $\Ok_X$, where $\rank E_0 = 1$,
    and let $D$ be the connection\footnote{The connection $D$ is
    defined by \eqref{eqDend}.}
    on $\End E$ induced by arbitrary connections on $E_0,\dots,E_p$.
    Then
    \begin{equation} \label{eqmain}
        \frac{1}{(2\pi i)^p p!} D\varphi_1 \cdots D\varphi_p R^E_p = [Z].
    \end{equation}
\end{thm}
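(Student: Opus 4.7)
The plan is to show that both sides of \eqref{eqmain} are closed $(p,p)$-currents supported on $Z_\text{red}$, invoke the standard support/dimension principle to write each of them as a linear combination $\sum_i \alpha_i [Z_i]$, and then identify the coefficients by a local calculation at a generic smooth point of each irreducible component $Z_i$. That the left-hand side is supported on $Z_\text{red}$ reflects the known fact (from \cite{AW1}) that $R^E_p$ is supported on $Z$; its bidegree is $(p,p)$ because $R^E_p$ contributes $(0,p)$ and the smooth factor $D\varphi_1 \cdots D\varphi_p$ contributes $(p,0)$; closedness should follow from the structure of $R^E$ combined with the compatibilities $D\varphi_k\varphi_{k+1}+\varphi_k D\varphi_{k+1}=0$. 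The dimension principle then produces the decomposition above, and the theorem reduces to proving $\alpha_i = m_i$ for each $i$.

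To compute $\alpha_i$ I would localize near a generic smooth point $z_0 \in Z_i$, where $\Ok_Z$ is Cohen-Macaulay of codimension $p$, so by Auslander-Buchsbaum any minimal free resolution has length exactly $p$. Replacing $(E,\varphi)$ near $z_0$ with the sum of such a minimal resolution and a trivial acyclic complex (via a standard homotopy equivalence of resolutions, while tracking how $R^E$ and $D\varphi_k$ transform) reduces to the Cohen-Macaulay case, in which $R^E_p$ is essentially a Coleff-Herrera current. Choosing local coordinates with $Z_i = \{z_1 = \cdots = z_p = 0\}$ near $z_0$, I would cut by a transversal slice $H = \{z_{p+1}=c_{p+1},\dots,z_n=c_n\}$ as in the discussion of geometric multiplicity in the introduction, so that $Z\cap H$ is an Artinian scheme of length $m_i$ concentrated at $z_0$. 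Using compatibility of the Andersson-Wulcan calculus with generic transversal restriction, $\alpha_i$ equals the corresponding pairing computed on $H$.

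This reduces the theorem to the zero-dimensional Cohen-Macaulay assertion
\[
    \tfrac{1}{(2\pi i)^p p!}\int_H D\varphi_1 \cdots D\varphi_p\, R^E_p \;=\; \dim_{\C} \Ok_{H,z_0}/\mathcal{J}.
\]
For this I would appeal to Lejeune-Jalabert's formula (mentioned in the abstract), which expresses the fundamental class of a Cohen-Macaulay germ in exactly this Jacobian form at the level of local cohomology, combined with the known interpretation of $R^E_p$ as a Dolbeault model for the generator of top local cohomology; matching normalizations should supply the factor $(2\pi i)^p p!$. The main obstacle will be this zero-dimensional identification: carefully reconciling the Andersson-Wulcan current with the cohomological residue, tracking signs and combinatorial normalizations, and verifying that transversal slicing is well-defined on and compatible with the pseudomeromorphic structure of $R^E_p$.
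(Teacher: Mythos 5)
Your overall reduction -- localize at a generic smooth point of each component, pass to a minimal (hence length $p$) resolution there, and identify the coefficient with the geometric multiplicity -- matches the skeleton of the paper's argument, which also proves the identity generically and then concludes by the dimension principle. But two of your steps hide the actual mathematical content. First, replacing $(E,\varphi)$ by a minimal resolution is not a matter of "tracking how $R^E$ transforms under a homotopy equivalence": the residue current is built from minimal inverses $\sigma^E_k$ of the Hermitian metrics and is not functorial in any naive sense. Making this replacement legitimate is exactly Lemma~\ref{lmaindep} (and Lemma~\ref{lmaindep2}), whose proof requires the comparison formula of Theorem~\ref{thmRcomparison} together with Lemma~\ref{lma:DpsiDphi} to control the error terms $\psi_1\alpha$ and $\beta\varphi_p$; this is one of the two main technical inputs of the paper, not a routine verification.

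Second, and more seriously, your terminal step -- slicing to dimension zero and invoking Lejeune-Jalabert's formula together with "the known interpretation of $R^E_p$ as a Dolbeault model for the generator of top local cohomology" -- assumes precisely what has to be proved. The compatibility between Lejeune-Jalabert's cohomological residue and the Andersson--Wulcan current, including the normalization $(2\pi i)^p p!$, is the crux; the paper devotes Section~\ref{sectlj} to this comparison and explicitly points out that even granting it, one still needs the unverified identification \eqref{eqfundassumption} of Lejeune-Jalabert's fundamental class with the fundamental cycle as a current. This is why the paper instead gives a self-contained generic computation: it builds the explicit universal free resolution of Scheja--Storch and Eisenbud--Riemenschneider--Schreyer (Theorem~\ref{thmkoszul}, a Koszul complex over the auxiliary ring $A$), compares it via an explicit chain map with the Koszul complex of $(z_1^{\beta_1},\dots,z_p^{\beta_p})$, and evaluates the trace by an upper-triangularity argument (Lemmas~\ref{lma:mainKoszul} and~\ref{lmadphiapart}), reducing everything to the classical Poincar\'e--Lelong formula \eqref{eqpl}. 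In addition, your transversal-slicing step needs a restriction theorem for $R^E_p$ to generic slices that you do not supply (the paper avoids slicing altogether). So the architecture is sound, but the proposal leaves open the independence lemma, the slicing compatibility, and above all the zero-dimensional residue identification, which together constitute essentially the whole proof.
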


Note that the endomorphism $D\varphi_1 \cdots D\varphi_p$ depends on the choice of connections
on $E_0,\dots,E_p$ and the current $R^E_p$ in general depends on the choice of Hermitian
metrics on $E_0,\dots,E_p$. There is no assumption of any relation between the connections
and the Hermitian metrics.

Various special cases of Theorem~\ref{thmmain} and related results have been proved
earlier: 
Assume that $Z$ is a complete intersection of codimension $p$, 
i.e., $\mathcal O_Z=\mathcal O_X/\mathcal J$, where $\mathcal J$ is a complete
intersection ideal, generated by, say, $f = (f_1,\dots,f_p)$. Then Coleff and
Herrera proved in \cite{CH} the following generalization of the Poincar\'e-Lelong formula
\eqref{poin}:
\begin{equation} \label{eqpl}
    \frac{1}{(2\pi i)^p}\dbar \frac{1}{f_p} \wedge \cdots \wedge
    \dbar \frac{1}{f_1} \wedge df_1 \wedge \cdots \wedge df_p=[Z], 
\end{equation}
where $\dbar(1/f_p)\wedge \cdots\wedge \dbar(1/f_1)$ is the so-called \emph{Coleff-Herrera product}
of $f$. In this situation, one may choose the resolution $(E,\varphi)$ such that \eqref{eqmain}
becomes precisely \eqref{eqpl}, see \eqref{eqplcomputation}.

In \cite{DP} Demailly and Passare extended \eqref{eqpl} to the case when
$Z$ is a locally complete intersection, cf. Remark ~\ref{rem:DP}.
The result of Demailly-Passare was further extended by Andersson in \cite{ALelong},
where he proved that if one considers so-called Bochner-Martinelli residue
currents associated to generators of the defining ideal of an analytic space $Z$ of
pure dimension, and form a current similar to the left-hand side of \eqref{eqmain},
then a similar formula holds. This is a variant of the so-called King's formula, where the
right-hand side of \eqref{eqmain} is a current of integration like \eqref{eqfundcycle}, but where
the multiplicities $m_i$ are the corresponding \emph{algebraic}
(or \emph{Hilbert-Samuel}) multiplicities, see, e.g., \cite{Fulton}*{Chapter~4.3}.

If $Z$ and $(E,\varphi)$ are as in Theorem~\ref{thmmain}, then by
\cite{AndNoeth}*{Example~1}, there exists \emph{some} holomorphic 
$\Hom (E_p,E_0)$-valued form $\xi$ such that $\xi
R^E_p=[Z]$. Our Theorem ~\ref{thmmain} thus states that $(1/(2\pi i)^pp!)
D\varphi_1\cdots D\varphi_p$ is an explicit such $\xi$. 

In previous works, \cite{LW1} and \cite{WLJ}, we proved Theorem ~\ref{thmmain} for certain
resolutions of monomial ideals by explicitly computing the residue
currents $R^E$
and the Jacobian factors $D\varphi_1\cdots D\varphi_p$ respectively.

\smallskip

Another result that is closely related to ours,  although not formulated in terms of residue currents,
is a cohomological version of Theorem~\ref{thmmain} in the Cohen-Macaulay case 
due to Lejeune-Jalabert, \cite{LJ1}.
Given a free resolution $(E,\varphi)$ of
$\Ok_{Z,z}$ 
of minimal length, where $Z$ is a Cohen-Macaulay analytic
space,  
she constructed a generalization of the Grothendieck residue pairing, which in a
sense is a cohomological version of the current in \cite{AW1},  
and proved that the fundamental class of $Z$ at $z$ is represented by
$D\varphi_1\cdots D\varphi_p$. 
In Section
~\ref{sectlj} we describe this in more details and also discuss the
relation to our results. 
The relationship between Lejeune-Jalabert's residue pairing and the
residue currents in \cite{AW1} is elaborated in \cite{Lar15}, see
also \cites{Lund1, Lund2}.

\smallskip 

To be precise, the current in the left-hand side of \eqref{eqmain}
takes values in $\End E_0$. 
However, since $E_0$ has rank $1$, it is naturally identified with a
scalar-valued current. 
In fact, it is possible to drop the
assumption that $\rank E_0=1$, but to make sense of \eqref{eqmain} we
then need to turn the $\End E_0$-valued current 
\[
\Theta:= \frac{1}{(2\pi i)^p p!} D\varphi_1 \cdots D\varphi_p R^E_p
\]
into a scalar-valued current. 
We will describe two natural ways of doing this. The first one is to
take the trace $\tr \Theta$ of $\Theta$.
Secondly, let $\tau$ be the natural surjection 
$\tau : E_0 \to \coker \varphi_1 \cong \Ok_Z$. Since $R^E_p
\varphi_1 = 0$, see \eqref{eqannright} below,
one gets a well-defined $\Hom(\Ok_Z,E_p)$-valued current $R^E_p
\tau^{-1}$ by (locally) letting
$R^E_p \tau^{-1} f := R^E_p f_0$ for any section $f_0$ of $E_0$ such
that $\tau f_0 = f$. 
It follows that $\tau \Theta \tau^{-1}$ is a well-defined
$\End(\Ok_Z)$-valued current, which can be identified with a
scalar-valued current (annihilated by $\mathcal{J}$, where $\mathcal{J} \subset \Ok_X$ is the ideal defining $Z$).
Note that if $\rank E_0=1$, then $\tr \Theta$ and $\tau\Theta\tau^{-1}$
coincide with $\Theta$ (regarded as scalar currents).

\begin{thm} \label{thmmain2}
    Let $Z \subset X$ be an analytic space of pure codimension $p$, let $(E,\varphi)$
    be a Hermitian locally free resolution of $\Ok_Z$ over $\Ok_X$, and let $D$ be the connection
    on $\End E$ induced by arbitrary connections on $E_0,\dots,E_p$.
    Then
    \begin{equation} \label{eqmain2}
        \frac{1}{(2\pi i)^p p!}\tr \left( D\varphi_1 \cdots D\varphi_p R^E_p \right) = [Z]
    \end{equation}
    and
    \begin{equation} \label{eqmain3}
        \frac{1}{(2\pi i)^p p!} \tau D\varphi_1 \cdots D\varphi_p R^E_p \tau^{-1} = [Z],
    \end{equation}
    where $\tau$ is the natural surjection $\tau : E_0 \to \coker \varphi_1 \cong \Ok_Z$.
\end{thm}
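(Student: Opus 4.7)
The plan is to reduce both (\ref{eqmain2}) and (\ref{eqmain3}) to Theorem~\ref{thmmain} via a local comparison with a resolution whose top bundle has rank one. Both identities are local on $X$, so fix $z_0 \in X$ and work in a small neighborhood $U$. Since $\Ok_Z|_U$ is a cyclic $\Ok_U$-module, it admits a Hermitian locally free resolution $(\tilde E, \tilde\varphi)$ with $\tilde E_0 \cong \Ok_U$ of rank one, equipped with connections $\tilde D$. Theorem~\ref{thmmain} applied to $(\tilde E, \tilde\varphi)$ then yields the scalar identity
\[
\frac{1}{(2\pi i)^p p!}\, \tilde D\tilde\varphi_1 \cdots \tilde D\tilde\varphi_p\, R^{\tilde E}_p = [Z].
\]

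The next step is to set up the comparison between $(E,\varphi)$ and $(\tilde E, \tilde\varphi)$. Both are free resolutions of $\Ok_Z$, so the lifting property of projective resolutions yields chain maps $a = (a_k) \colon \tilde E \to E$ and $b = (b_k) \colon E \to \tilde E$ extending $\Id_{\Ok_Z}$, with $\tau a_0 = \tilde\tau$, $\tilde\tau b_0 = \tau$, and $\varphi_k a_k = a_{k-1}\tilde\varphi_k$ and similarly for $b$. The compositions $ab$ and $ba$ are chain homotopic to the respective identities; in degree zero this reads $a_0 b_0 = \Id_{E_0} + \varphi_1 s$ and $b_0 a_0 = \Id_{\tilde E_0} + \tilde\varphi_1 \tilde s$ for certain $s, \tilde s$. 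The compatibility of residue currents with chain maps, established in \cite{AW1}, furnishes the key identity $R^E_p a_0 = a_p R^{\tilde E}_p$, and symmetrically for $b$.

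For (\ref{eqmain3}), the expression $\tau D\varphi_1\cdots D\varphi_p R^E_p \tau^{-1}$ is an intrinsic endomorphism of $\Ok_Z$, well-defined because $R^E_p\varphi_1 = 0$. Using $\tau a_0 = \tilde\tau$ together with the chain-map relations, I would rewrite it directly as the corresponding expression for $(\tilde E, \tilde\varphi)$, which equals $[Z]$. For (\ref{eqmain2}), the key identity becomes $\tr\bigl(D\varphi_1 \cdots D\varphi_p R^E_p\bigr) = \tr\bigl(\tilde D \tilde\varphi_1 \cdots \tilde D\tilde\varphi_p R^{\tilde E}_p\bigr)$. This I would derive by inserting $a_0 b_0 = \Id_{E_0} + \varphi_1 s$ to the right of $R^E_p$: since $R^E_p\varphi_1 = 0$, only $\Id_{E_0}$ survives; then cyclic invariance of the trace and the compatibilities above transport the full expression to the $(\tilde E, \tilde\varphi)$-side.

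The main obstacle is controlling how the Jacobian factor $D\varphi_1\cdots D\varphi_p$ transforms under the comparison maps, since the connection $D$ is not canonical and $D\varphi_k$ is not strictly covariant under chain maps (a term $D(a_{k-1})\tilde\varphi_k$ appears). To circumvent this, I would first show that the left-hand sides of (\ref{eqmain2}) and (\ref{eqmain3}) are independent of the choice of connections on $E_0,\ldots, E_p$, using the annihilation properties $R^E_p\varphi_1 = 0$ and the relations $\varphi_{k+1} R^E_{k+1} = \dbar R^E_k$ to absorb any discrepancy coming from a change of connection; with this freedom one can arrange $\tilde D$ to be induced from $D$ compatibly with $a,b$, after which the transport argument sketched above goes through cleanly.
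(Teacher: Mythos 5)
Your proposal does not address the core difficulty of the theorem. In the paper, Theorem~\ref{thmmain} is \emph{not} proved independently; it is obtained as the special case $\rank E_0=1$ of Theorem~\ref{thmmain2}, and the only proof given is the proof of Theorem~\ref{thmmain2}. Invoking Theorem~\ref{thmmain} for an auxiliary rank-one resolution $(\tilde E,\tilde\varphi)$ as your base case therefore makes the argument circular: after the comparison step you have reduced \eqref{eqmain2} and \eqref{eqmain3} to the very identity you are trying to prove. What is missing is a proof of the identity for at least one concrete resolution. The paper supplies this by working generically on $Z_{\rm red}$ (legitimate by the dimension principle, since both sides are pseudomeromorphic $(p,p)$-currents) and using the universal free resolution of Scheja--Storch and Eisenbud--Riemenschneider--Schreyer, a Koszul complex over the ring $A=\Ok_{X,\zeta}\otimes_{\Ok_{W,\zeta}}\Ok_{Z,\zeta}$; the explicit computation in Lemmas~\ref{lma:mainKoszul} and~\ref{lmadphiapart}, which produces the geometric multiplicity as $\tr\cl{1}=\rank_{\Ok_{X,\zeta}}A=m$, is the heart of the proof and has no counterpart in your proposal. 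Note that this universal resolution has $\rank K_0=m>1$ in general, so even the rank-one case of the theorem is proved via a higher-rank resolution; there is no shortcut through rank one.

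The comparison half of your argument is in the right spirit (it corresponds to Lemmas~\ref{lmaindep} and~\ref{lmaindep2}) but is imprecise in two places. First, the identity $R^E_p a_0=a_pR^{F}_p$ is not from \cite{AW1} and does not hold in general: the comparison formula of \cite{LComp} gives $R^E_pa_0=a_pR^F_p+\varphi_{p+1}M_{p+1}$, and the correction term disappears only when the target resolution has length $p$; one must first restrict to the Cohen--Macaulay locus by the dimension principle and then control $M_{p+1}$ as in Lemma~\ref{lmampplus1}. Second, ``arranging $\tilde D$ to be induced from $D$ compatibly with $a,b$'' is not possible in general and is not what is needed; the precise statement is Lemma~\ref{lma:DpsiDphi}, namely $D\psi_1\cdots D\psi_pb_p=b_0D\varphi_1\cdots D\varphi_p+\psi_1\alpha+\beta\varphi_p$, after which the error terms are killed by $\varphi_pR^E_p=0$ and by cyclicity of the trace combined with $R^E_p\varphi_1=0$ (respectively by $\eta\psi_1=0$ for \eqref{eqmain3}). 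These points are fixable, but the circularity above is not: without an explicit computation on a concrete resolution the proof has no base case.
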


In view of the discussion above, note that Theorem ~\ref{thmmain} is
just a special case of Theorem ~\ref{thmmain2}. 

\smallskip

The proof of Theorem ~\ref{thmmain2} is given in Section
~\ref{sectmainproof}. The first key ingredient is two lemmas, Lemmas
~\ref{lmaindep} and ~\ref{lmaindep2}, which assert that the left-hand sides of
\eqref{eqmain2} and \eqref{eqmain3}, respectively, only depend on $Z$
and not on the choice of $(E, \varphi)$ or $D$. In particular, it follows that the
left-hand side of \eqref{eqmain2} coincides with the left-hand side of
\eqref{eqmain3}, cf.\ \eqref{eqtraceidentification}. Thus, to
prove Theorem ~\ref{thmmain2} it is enough to prove \eqref{eqmain2} for
a specific choice of resolution and connection.  
The proofs of Lemmas~\ref{lmaindep} and ~\ref{lmaindep2}
rely on a
comparison formula for residue currents due to the first author,
\cite{LComp}, see Section ~\ref{ssectcomp}.

By the \emph{dimension principle}, Proposition ~\ref{proppmdim}, for so-called \emph{pseudomeromorphic currents},
see Section ~\ref{ssectpm}, it suffices to prove \eqref{eqmain2} generically on $Z_\text{red}$ (i.e., outside a hypersurface of
$Z_\text{red}$). For $z$ generically on $Z_\text{red}$ we can use a certain
\emph{universal free resolution} of $\Ok_{Z,z}$, based on a construction by 
Scheja and Storch, \cite{SS}, and
Eisenbud, Riemenschneider and Schreyer, \cite{ERS}; this is described
in Section ~\ref{sectkoszul}. The inspiration to use this universal
free resolution comes from \cite{LJ1}. 
The resolution is in general far from being minimal, in
particular, $\rank E_0>1$ in general, but it is explicit enough so that
we can explicitly
compute \eqref{eqmain2}, see Lemma ~\ref{lma:mainKoszul}. 

\smallskip 

In Theorems ~\ref{thmmain} and ~\ref{thmmain2} we assume that $Z$ has
pure codimension, or, equivalently, 
pure dimension. In fact, for the proofs we only need that $Z$
has \emph{pure dimension} in the weak sense that all irreducible components
of $Z_\text{red}$ have the same dimension, in other words, all
minimal primes of $\mathcal J$ have the same dimension. In particular,
we allow $\mathcal J$ to have embedded primes. 
\begin{ex}\label{midair}
Let $Z\subset \C^2$ be defined by $\mathcal{J} = \mathcal{J}(y^k,x^\ell y^m) \subset \Ok_{\C^2}$,
where $m < k$.
Then $Z$ has pure dimension, since $Z_\text{red}$ equals 
$\{y=0\}$, which is irreducible. However, note that $\mathcal J$ 
has an embedded prime $\mathcal J(x,y)$ of dimension $0$. 
\end{ex}

\begin{ex}\label{condo}
Let $Z\subset \C^3$ be defined by $\mathcal{J} = \mathcal{J}(xz,yz) \subset \Ok_{\C^3}$. 
Then $Z$ does not have pure dimension, since its irreducible components
$\{ z = 0 \}$ and $\{ x = y = 0 \}$ have dimension $2$ and $1$, respectively.
\end{ex}

We get a version of Theorem ~\ref{thmmain} also when $Z$ does not have
pure dimension, without much extra work. However, the formulation 
becomes slightly more involved. 
Since the residue currents $R^E_k$ are pseudomeromorphic, see Section
~\ref{ssectpm}, it follows that
one can give a natural meaning to the
restrictions ${\bf 1}_WR^E_k$ if $W$ is a subvariety of $X$. 

\begin{thm} \label{thmnonpure}
    Let $Z \subset X$ be an analytic space. Assume that $\dim X=N$ and $\codim Z=p$. 
    Let $(E,\varphi)$ be a Hermitian locally free resolution of $\Ok_Z$ over $\Ok_X$, where $\rank E_0 = 1$,
    and let $D$ be the connection on $\End E$ induced by arbitrary connections on $E_0,\dots,E_N$.
    Let $W_k$ be the union of the components of $Z_{\rm red}$ of codimension $k$,
    and define $R_{[k]} := {\bf 1}_{W_k} R^E_k$.
    Then
\begin{equation}\label{jesse}
    \sum_{k=p}^{N} \frac{1}{(2\pi i)^kk!} D\varphi_1 \cdots D\varphi_k
    R_{[k]} = [Z].
\end{equation}
\end{thm}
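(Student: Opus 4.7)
The plan is to prove \eqref{jesse} by decomposing both sides by bidegree and reducing each graded piece to the pure-codimension case already established in Theorem~\ref{thmmain}. Since $D\varphi_1\cdots D\varphi_k$ is a $\Hom(E_k,E_0)$-valued $(k,0)$-form and $R_{[k]}$ is a $\Hom(E_0,E_k)$-valued $(0,k)$-current, the $k$-th summand on the left-hand side of \eqref{jesse} has bidegree $(k,k)$. The right-hand side splits accordingly as $[Z]=\sum_k[Z^{(k)}]$, with
\[
    [Z^{(k)}] := \sum_{\codim Z_i = k} m_i [Z_i]
\]
of bidegree $(k,k)$. It therefore suffices, for each $k\in\{p,\dots,N\}$, to prove
\[
    \frac{1}{(2\pi i)^k k!}\, D\varphi_1\cdots D\varphi_k\, R_{[k]} \;=\; [Z^{(k)}].
\]

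Next I would localize using the dimension principle. Both currents above are pseudomeromorphic of bidegree $(k,k)$ and supported in $W_k$, which has pure codimension $k$. By Proposition~\ref{proppmdim}, the difference, being pseudomeromorphic of bidegree $(k,k)$ and supported in the proper subvariety $(W_k)_{\rm sing}\cup\bigcup_{j\ne k}(W_k\cap W_j)$ of codimension $>k$, must vanish. It therefore suffices to check equality in a small neighborhood $U$ of a generic point $z$ of each irreducible component $Z_i\subset W_k$.

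At such a generic $z$, only $Z_i$ passes through $z$ among the irreducible components of $Z_{\rm red}$. The germ $\mathcal J_z$ reduces to the $\mathfrak p_i$-primary component of $\mathcal J$, since embedded primes have support in proper subvarieties of the $Z_j$ and so do not reach a generic point. Hence $Z$ has pure codimension $k$ on a sufficiently small $U$, and $(E,\varphi)|_U$ is a Hermitian locally free resolution of $\Ok_Z|_U$. Since $\supp R^E_k\cap U\subset Z_i\cap U\subset W_k$, we have $R_{[k]}=R^E_k$ on $U$, while for $j\ne k$ the inclusion $z\notin W_j$ gives $R_{[j]}\equiv 0$ on $U$. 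Applying Theorem~\ref{thmmain} to the local resolution then yields
\[
    \frac{1}{(2\pi i)^k k!}\, D\varphi_1\cdots D\varphi_k\, R^E_k \;=\; [Z|_U] \;=\; m_i [Z_i|_U],
\]
which agrees with $[Z^{(k)}]$ on $U$. Combined with the dimension-principle reduction above, this proves the per-codimension statement, and summing over $k$ recovers \eqref{jesse}.

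I expect the main obstacle to be the bookkeeping needed to make the localization clean in the presence of components of different codimensions and possible embedded primes of $\mathcal J$: one must verify that near a generic point of a codimension-$k$ component the restricted current $R_{[k]}$ really does coincide with the unrestricted $R^E_k$ that Theorem~\ref{thmmain} demands, and that the remaining terms $R_{[j]}$ with $j\ne k$ contribute nothing there. Once genericity of $z$ is exploited to avoid every lower-dimensional stratum, every other irreducible component, and every embedded prime, the problem collapses to a direct application of the pure-codimension result already in hand.
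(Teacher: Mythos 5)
Your proposal is correct and follows essentially the same route as the paper: decompose $[Z]$ into its codimension-$k$ parts, observe that away from the locus where components of different codimensions meet (a set of codimension $\geq k+1$) the space has pure codimension $k$ so that Theorem~\ref{thmmain} applies and $R_{[k]}=R^E_k$ while the other $R_{[j]}$ vanish, and then remove the exceptional set with the dimension principle. The only (harmless) difference is that you localize at generic points and additionally argue away embedded primes, whereas the paper works directly on $W_k\setminus V_k$ and notes that Theorem~\ref{thmmain} already tolerates embedded primes.
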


\begin{remark}\label{bingo}
As in Theorem ~\ref{thmmain2} we could drop the assumption that $\rank
E_0=1$. 
Using the notation from above, we get
\begin{equation}\label{sprak}
    \sum_{k=p}^{N} \frac{1}{(2\pi i)^kk!} \tr(D\varphi_1 \cdots D\varphi_k
    R_{[k]}) = 
\sum_{k=p}^{N} \frac{1}{(2\pi i)^kk!} \tau D\varphi_1 \cdots D\varphi_k
    R_{[k]} \tau^{-1}=
 [Z],
\end{equation}
see Remark ~\ref{duo}. 
\end{remark}

It is natural to also consider the ``full'' currents
$D\varphi_1 \cdots D\varphi_k R_k$ and it would be interesting to
investigate whether they may capture geometric or algebraic
information (in addition to the fundamental cycle). In Section ~\ref{intressant} we compute the current
$D\varphi_1D\varphi_2 R^E_2$ for a Hermitian resolution of $Z$ from
Example ~\ref{midair}. We also illustrate Theorem ~\ref{thmnonpure} by
explicitly computing the currents in \eqref{jesse} in the situation of
Example ~\ref{condo}. 

\medskip 
\noindent 
\textbf{Acknowledgement:} We would like to thank Mats Andersson and
H\aa kan Samuelsson Kalm for valuable discussions on the topic of this
paper. 

\section{Preliminaries} \label{sectprel}

Throughout this paper $X$ will be a complex manifold of dimension $N$, and
$\chi(t)$ will be (a smooth approximand of) the characteristic
function of the interval $[1,\infty)$. 
Let $f$ be a holomorphic function on $X$ or, more generally, a
holomorphic section of a line bundle over $X$. Then there is an associated \emph{principal
  value current} $1/f$, \cites{Dol, HL}, 
defined, e.g., 
as the limit 
\begin{equation*}
\lim_{\epsilon\to 0}\chi(|f|^2/\epsilon)\frac{1}{f}. 
\end{equation*}
The associated \emph{residue current} is defined as $\dbar(1/f)$, cf.\
\eqref{martes}.

\subsection{Pseudomeromorphic currents} \label{ssectpm}

Following \cite{AW2} 
we say that a current of the form 
\begin{equation*}
      \frac{1}{z_{1}^{a_1}}\cdots\frac{1}{z_{k}^{a_k}}\dbar\frac{1}{z_{{k+1}}^{a_{k+1}}}\wedge\cdots\wedge\dbar\frac{1}{z_{m}^{a_m}}\wedge \xi,
\end{equation*}
where $z_1,\ldots, z_N$ is a local coordinate system and $\xi$ is a smooth form with compact support,
is an {\it elementary current}. 
Moreover a current on $X$ is said to be
\emph{pseudomeromorphic} if it can be written as a locally
finite sum of push-forwards of elementary currents under compositions
of modifications, open inclusions, or projections.\footnote{In
  \cite{AW2} only modifications were allowed. This more general class
  of pseudomeromorphic currents appeared in \cite{AS}.} 
Note that if $T$ is pseudomeromorphic, then so is $\dbar T$. 

The sheaf of pseudomeromorphic currents, denoted $\PM$, was introduced to
obtain a coherent approach to questions concerning principal value and
residue currents; in fact, all principal value and residue currents in
this paper 
are pseudomeromorphic.  It follows from, e.g., \cite{ALelong} that
currents of integration along analytic subvarieties $W\subset X$ are pseudomeromorphic.

In many ways pseudomeromorphic currents behave like normal
currents, i.e., currents $T$ such that $T$ and $dT$ are of order $0$. In particular, 
they satisfy the following \emph{dimension principle}, \cite{AW2}*{Corollary~2.4}.
\begin{prop} \label{proppmdim}
    If $T \in \PM(X)$ is a $(p,q)$-current with support on a
    subvariety $W\subset X$,
    and $\codim W > q$, then $T = 0$.
\end{prop}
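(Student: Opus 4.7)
The plan is to reduce to the case of an elementary current and then exploit the fact that the $\dbar$-degree of an elementary current controls the codimension of its support on the model space.

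The statement is local, so we may work in a neighborhood $U$ of a point $x_0\in W$ and, shrinking $U$ if necessary, write
\[
T = \sum_j (\pi_j)_* \tau_j,
\]
where each $\tau_j$ is an elementary current on some model space $\widetilde U_j$ and $\pi_j : \widetilde U_j \to U$ is a composition of modifications, open inclusions and projections. Since pushforward preserves bidegree, $(\pi_j)_*\tau_j$ is again a $(p,q)$-current, so it is enough to treat a single $\pi_* \tau$. The first step is thus to record that $\tau$, written in local coordinates $(w_1,\ldots,w_n)$ on $\widetilde U$ as
\[
\tau = \frac{1}{w_1^{a_1}}\cdots\frac{1}{w_k^{a_k}} \dbar\frac{1}{w_{k+1}^{a_{k+1}}} \wedge\cdots\wedge \dbar\frac{1}{w_m^{a_m}} \wedge \xi,
\]
has $\dbar$-degree exactly $m-k$, which must equal $q$, and that $\mathrm{supp}\,\tau\subset \{w_{k+1}=\cdots=w_m=0\}$, a coordinate subspace of codimension $q$ in $\widetilde U$.

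Next I would choose holomorphic functions $h_1,\ldots,h_{q+1}$ on $U$ all vanishing on $W$ (possible since $\codim W > q$). The hypothesis $\mathrm{supp}\, T\subset W$ gives $\mathrm{supp}\,\tau\subset \pi^{-1}(W)\subset \{\pi^*h_1=\cdots=\pi^*h_{q+1}=0\}$. The key step is then to show that an elementary current of $\dbar$-degree $q$ whose support lies in the common zero locus of $q+1$ holomorphic functions must vanish. This is done by successively resolving the $\pi^*h_i$ by further modifications so that each becomes a monomial in local coordinates; at each stage the restriction $\mathbf 1_{\pi^*h_i=0}$ of a pseudomeromorphic current is a well-defined pseudomeromorphic current (since elementary currents have this property coordinate-by-coordinate, using that $z\,\dbar(1/z^a)=\dbar(1/z^{a-1})$ and $\bar z\,\dbar(1/z^a)=0$), and a bookkeeping on coordinates shows that after $q+1$ such restrictions one has used up more $\dbar$-factors than are available, hence the resulting elementary term vanishes.

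The main obstacle I expect is the monomialization/restriction step: making rigorous that successively restricting an elementary current to the zero set of $q+1$ holomorphic functions in generic position forces vanishing. This requires carefully tracking what happens to the coefficients $a_i$ and to the holomorphic/antiholomorphic factors under each blow-up, and verifying that the resulting elementary constituents either lose a needed $\dbar$-factor or acquire a smooth factor that kills them on the support. Granted this, applying $\pi_*$ and summing over $j$ yields $T=0$.
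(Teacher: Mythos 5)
First, a remark on context: the paper does not prove this proposition but quotes it from \cite{AW2}*{Corollary~2.4}, so there is no in-paper argument to compare yours against; I therefore assess your sketch on its own. It has the right general flavour (reduce to elementary currents, play the antiholomorphic degree against the codimension of the support), but it contains genuine gaps. (i) Pushforward does \emph{not} preserve bidegree for the maps allowed here: a projection with $d$-dimensional fibres sends a $(p+d,q+d)$-current to a $(p,q)$-current, so you cannot conclude that each $\tau_j$ has $\dbar$-degree $q$; moreover the smooth factor $\xi$ may itself carry antiholomorphic degree, so even for a modification the $\dbar$-degree of $\tau$ is $m-k$ plus the antiholomorphic degree of $\xi$, not $m-k$. (ii) More seriously, $\supp T\subset W$ does \emph{not} give $\supp\tau_j\subset\pi_j^{-1}(W)$: $T$ is a locally finite \emph{sum} of pushforwards, and individual terms may have larger supports that cancel in the sum. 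To localize term by term you must first invoke the restriction operator and the identities $\mathbf 1_WT=T$ and $\mathbf 1_W\pi_*\tau=\pi_*(\mathbf 1_{\pi^{-1}(W)}\tau)$. (iii) The concluding count (``after $q+1$ restrictions one has used up more $\dbar$-factors than are available'') is not substantiated: $\mathbf 1_{\{z_i=0\}}$ applied to a current containing the factor $\dbar(1/z_i^{a_i})$ returns it unchanged rather than consuming it, and, worse, $\pi^{-1}(W)$ can have codimension $1$ upstairs (it contains exceptional divisors), so a codimension count on the model space does not see the hypothesis $\codim W>q$ at all. The hard case — an elementary current with a single $\dbar$-factor living on an exceptional divisor whose image has high codimension — is exactly what your bookkeeping would have to exclude, and the sketch does not engage with it.

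The standard argument decouples the two halves of what you are attempting. One first proves, by precisely the monomialization and one-variable computations you mention ($\bar z\,\dbar(1/z^a)=0$ and $d\bar z\wedge\dbar(1/z^a)=0$), that $\bar hT=0$ \emph{and} $d\bar h\wedge T=0$ for every holomorphic $h$ vanishing on $\supp T$; this is where elementary currents and resolutions enter, and it requires no control of bidegrees or codimensions upstairs. The dimension principle is then a soft consequence downstairs on $X$: near a regular point of $W$ choose coordinates with $W=\{w_1=\dots=w_\kappa=0\}$ and $\kappa>q$; since $d\bar w_i\wedge T=0$ for $i=1,\dots,\kappa$, every term of $T$ must contain $d\bar w_1\wedge\dots\wedge d\bar w_\kappa$ and hence have antiholomorphic degree at least $\kappa>q$, so $T=0$ there; consequently $T$ is supported on $W_{\rm sing}$, and one concludes by induction on $\dim W$. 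I would encourage you to restructure your proof along these lines rather than pushing a codimension count through an arbitrary composition of modifications, open inclusions and projections.
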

Moreover, pseudomeromorphic currents admit natural restrictions to analytic subvarieties,
see \cite{AW2}*{Section~3} and also \cite{AS}*{Proposition~2.3}.
If $T \in \PM(X)$, $W \subset X$ is a subvariety of $X$, and $h$ is a tuple of
holomorphic functions such that $W = \{h=0\}$, the restriction
$\mathbf 1_W T$ can be defined, e.g., as 
\begin{equation*}
    {\bf 1}_W T := \lim_{\epsilon \to 0} \left(1-\chi(|h|^2/\epsilon)\right) T. 
\end{equation*}
This definition is independent of the choice of $\chi$ and the tuple $h$, and ${\bf 1}_W T$ is a pseudomeromorphic
current with support on $W$. 
If ${\bf 1}_WT=0$ for all subvarieties $W\subset X$ of positive codimension, then $T$ is
said to have the \emph{standard extension property, SEP}.

\subsection{Superstructure} 
Let  
\begin{equation*}
    0 \xrightarrow[]{} E_\nu \xrightarrow[]{\varphi_\nu} E_{\nu-1} \xrightarrow[]{\varphi_{\nu-1}}
    \cdots \xrightarrow[]{\varphi_2} E_1 \xrightarrow[]{\varphi_1} E_0 
\end{equation*}
be a complex of locally free $\Ok_X$-modules. 
Then $E := \bigoplus E_k$ has a natural superstructure, i.e., a $\mathbb{Z}_2$-grading,
which splits $E$ into odd and even elements $E^+$ and $E^-$, where $E^+ = \bigoplus E_{2k}$
and $E^- = \bigoplus E_{2k+1}$. Also $\End E$ gets a superstructure by letting the even elements be
the endomorphisms preserving the degree, and the odd elements the endomorphisms switching degrees.

We let $\E$ and $\E^\bullet$ denote the sheaves of smooth functions
and forms, respectively, on $X$ and we let $\E^\bullet(E) = \E^\bullet\otimes_{\E} \E(E)$ and
$\E^\bullet(\End E) = \E^\bullet\otimes_{\E} \E(\End E)$ 
be the sheaves of form-valued sections of $E$ and $\End E$,
respectively. 
Given a section $\gamma = \omega\otimes \eta$, where 
$\omega$ is a smooth form and $\eta$ is a smooth section of $E$ or $\End E$,
we let 
$\deg_f \gamma:=\deg \omega$ and
$\deg_e \gamma:=\deg \eta$.
Then $\E^\bullet(E)$ and $\mathcal{E}^\bullet(\End E)$ inherit superstructures by letting 
$\deg \gamma := \deg_f \gamma + \deg_e \gamma$. 
Both $\mathcal{E}^\bullet(E)$ and $\mathcal{E}^\bullet (\End E)$ are
naturally left $\E^\bullet$-modules. We make them into
right $\E^\bullet$-modules by letting 
\begin{equation}\label{motex}
\gamma \omega = (-1)^{(\deg \gamma)(\deg \omega)} \omega\gamma,
\end{equation}
where $\omega$ is a smooth form, and $\gamma$ is a section of
$\mathcal{E}^\bullet(E)$ or $\mathcal{E}^\bullet(\End E)$.
Moreover, if $\beta=\alpha\otimes\xi$ and $\gamma=\omega\otimes\eta, \gamma'=\omega'\otimes\eta'$ are sections of $\E^\bullet(E)$
and $\E^\bullet(\End E)$, respectively, we let 
\begin{eqnarray}
\notag\gamma(\beta)&=&(-1)^{(\deg_e\gamma)(\deg_f\beta)}
\omega\wedge\alpha \otimes\eta(\xi), \\
\label{circa}
\gamma \gamma'&=&(-1)^{(\deg_e\gamma)(\deg_f\gamma')}
\omega\wedge\omega' \otimes\eta \eta'. 
\end{eqnarray}
Note that if $\gamma=\alpha\otimes \Id$ then $\gamma \beta =
\alpha\beta$, $\gamma\gamma'=\alpha\gamma'$, and
$\gamma'\gamma=\gamma'\alpha$, 
cf.\ \eqref{motex}. Thus we can regard a form $\alpha$ as a
(form-valued) endomorphism. Moreover, we have the following
associativity: $(\gamma\gamma')
\beta=\gamma(\gamma'\beta)$ and $(\gamma
\gamma')\gamma''=\gamma(\gamma'\gamma'')$ if $\gamma''$ is a
section of $\E^\bullet (\End E)$. 
Analogously the sheaves $\mathcal C^\bullet(E) = \mathcal C^\bullet\otimes_{\E} \E(E)$ and
$\mathcal C^\bullet(\End E) = \mathcal C^\bullet\otimes_{\E} \E(\End
E)$ of current-valued sections of $E$ and $\End E$,
respectively, inherit superstructures. 

If $E_0,\dots,E_\nu$ (considered as vector bundles) are equipped with connections $D_{E_0},\dots,D_{E_\nu}$, and $D_E$ is the connection $\bigoplus D_{E_i}$
on $E$, we equip $\End E$ with the induced connection $D_{\End}$ defined by
\begin{equation} \label{eqDend}
    D_E(\gamma(\xi)) = D_{\End}(\gamma)\xi + (-1)^{\deg \gamma}
    \gamma (D_E \xi),
\end{equation}
where $\xi$ is a section of $\E^\bullet(E)$ and $\gamma$ is a section of $\E^\bullet(\End E)$.
It is then straightforward to verify that for arbitrary 
sections $\gamma, \gamma'$ of  $\E^\bullet(\End E)$,
\begin{equation} \label{eqleibniz}
    D_{\End}(\gamma\gamma') = D_{\End}\gamma\gamma' + (-1)^{\deg
      \gamma}\gamma D_{\End}\gamma'. 
\end{equation}
Moreover, note that if $\gamma=\alpha\otimes\Id$, then $D_{\End}
\gamma=d\alpha$, so, again, we can regard a form $\alpha$ as a
(form-valued) endomorphism. 

Throughout this paper we will use the sign conventions associated with
this superstructure, cf.\ Section ~\ref{matrisnot}.

\begin{ex} \label{exDkoszul}
    We consider the situation when $(E,\varphi)$ is the Koszul complex $(K,\phi) = (\bigwedge \Ok_X^{\oplus p}, \delta_f)$
    associated to a tuple $(f_1,\dots,f_p)$ of holomorphic functions, and $e_1,\dots,e_p$ is the standard basis of $\Ok_X^{\oplus p}$
    so that $\delta_f$ is contraction with $f = f_1 e_1^* + \dots + f_p e_p^*$, see Section~\ref{ssectkoszul}.
    If we assume that $D$ is trivial with respect to the induced bases $e_I$ of $(K,\phi)$, then $D\delta_f$ is contraction
    with $df_1\wedge  e_1^* + \dots + df_p \wedge e_p^*$. As $df_i \wedge e_i^*$ is even, we thus get that $D\phi_1 \cdots D\phi_p$ is
    contraction with
    \begin{equation*}
        (df_1 \wedge e_1^*+ \dots + df_p \wedge e_p^*)^p = p!~ df_1 \wedge e_1^* \wedge \dots \wedge df_p \wedge e_p^*
        = p! ~df_1 \wedge \dots \wedge df_p \wedge e_{\{1,\dots,p\}}^*,
    \end{equation*}
    where $e_{\{1,\dots,p\}}$ and $e_\emptyset$ are frames of $E_p$ and $E_0$, see Section~\ref{ssectkoszul} for notation.
    Thus
    \begin{equation} \label{eq:Dkoszul}
        D\phi_1 \cdots D\phi_p = p! ~df_1 \wedge \dots \wedge df_p
        \wedge e_\emptyset \wedge e_{\{1,\dots,p\}}^*. 
    \end{equation}
\end{ex}

\subsection{Residue currents associated with Hermitian locally free resolutions}\label{joo}

Let $\mathcal{G}$ be a coherent sheaf on $X$ of $\codim p > 0$  
with a Hermitian locally
free resolution $(E,\varphi)$, cf.\ the introduction. 
In \cite{AW1} Andersson and the second author defined a
($\Hom(E_0,E)$-valued) current $R^E$ associated with $(E,\varphi)$.
We will write $R^E = \sum R_k^E$, where $R^E_k$ is the part of $R^E$ which takes
values in $\Hom(E_0,E_k)$. The current $R^E_k$ is a $(0,k)$-current with support
on $\supp \mathcal{G}$ and thus  $R^E_k = 0$ if $k < p$ by the dimension principle,
Proposition ~\ref{proppmdim}.
The current $R^E$ satisfies  that if $\alpha$ is a holomorphic section of $E_0$, then 
$R^E \alpha = 0$ if and only if $\alpha$ belongs to $\im
\varphi_1$, \cite{AW1}*{Theorem~1.1}; this can be seen as a duality principle.
In particular, 
\begin{equation}\label{eqannright}
R^E_p \varphi_1 = 0. 
\end{equation}
The current $R^E$ is $\nabla$-closed, where $\nabla = \varphi - \dbar$, i.e.,
$\varphi_k R^E_k - \dbar R^E_{k-1}=0$ for all $k$. In particular,
\begin{equation} \label{eqannleft}
    \varphi_p R^E_p = 0.
\end{equation}

For details about the construction of these residue currents, we refer to \cite{AW1}.
For further reference, we mention that the construction is related to certain 
\emph{singularity subvarieties} associated to a coherent analytic sheaf, see \cite{ST}.
The singularity subvariety $Z^E_k$ is defined as the set where $\varphi_k$ does not have optimal
rank\footnote{In \cite{ST}, the singularity subvariety $S_\ell(\mathcal{G})$ is defined
as the set of $x$ such that $\depth_x \mathcal{G}_x \leq \ell$, and by the Auslander-Buchsbaum formula,
$Z^E_k = S_{N-k}(\mathcal{G})$.}.
By uniqueness of minimal free resolutions, these sets are in fact independent of the
choice of $(E,\varphi)$, and indeed only depend on $\mathcal{G}$.

\begin{ex}
Assume that $Z$ is a complete intersection of codimension $p$,
i.e., $\mathcal O_Z=\mathcal O_X/\mathcal J$, where $\mathcal J$ is a complete
intersection ideal, generated by, say, $f = (f_1,\dots,f_p)$.
Let $(E,\varphi)$ be the Koszul complex of $f$.
Then the corresponding sheaf complex is a free resolution of $\Ok_Z$ and 
\begin{equation} \label{eq:bmch}
    R^E_p = \dbar\frac{1}{f_p} \wedge \cdots \wedge
    \dbar\frac{1}{f_1}\wedge e_{\{1,\dots,p\}} \wedge e_\emptyset^*, 
\end{equation}
where $e_{\{1,\dots,p\}}$ and $e_\emptyset$ are frames of $E_p$ and $E_0$, see Section~\ref{ssectkoszul}
for notation. This was proven in  \cite{PTY}*{Theorem~4.1}\footnote{
In fact, in \cite{PTY} it was
  proved that $\dbar(1/f_p)\wedge\cdots\wedge\dbar (1/f_1)$ equals the
  so-called Bochner-Martinelli residue current 
  of $f$, which by \cite{AndCH}*{Corollary~3.5} is the coefficient of $R^E$ (i.e., the
  current in front of $e_{\{1,\ldots, p\}}\wedge e_\emptyset^*$).} and
  \cite{AndCH}*{Corollary~3.5}. 
\end{ex}

\subsection{A comparison formula for residue currents} \label{ssectcomp}

Let $\alpha : H \to G$ be a homomorphism of finitely generated $\Ok_{X,\zeta}$-modules,
and let $(F,\psi)$ and $(E,\varphi)$ be free resolutions of $H$ and $G$ respectively.
We say that a morphism of complexes $a : (F,\psi) \to (E,\varphi)$ \emph{extends} $\alpha$ if the map
$\coker \psi_1 \cong H \to G \cong \coker \varphi_1$
induced by $a_0$ equals $\alpha$.

\begin{prop} \label{propcomplexcomparison}
    Let $\alpha : H \to G$ be a homomorphism of finitely generated $\Ok_{X,\zeta}$-modules, and let $(F,\psi)$ and
    $(E,\varphi)$ be free resolutions of $H$ and $G$ respectively.
    Then, there exists a morphism $a : (F,\psi) \to (E,\varphi)$ of complexes which extends $\alpha$.

    If $\tilde{a} : (F,\psi) \to (E,\varphi)$ is any other such morphism, 
    then there exists a morphism $s_0 : F_0 \to E_1$ such that $a_0 - \tilde{a}_0 = \varphi_1 s_0$.
\end{prop}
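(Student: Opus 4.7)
The plan is to prove this by the standard inductive lifting argument from homological algebra, relying on projectivity (hence liftability) of the free modules $F_k$ against the surjections obtained from the exactness of $(E,\varphi)$.

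For the existence of $a$, I would construct the maps $a_k : F_k \to E_k$ by induction on $k$. For the base case, the composition $F_0 \to \coker \psi_1 \cong H \xrightarrow{\alpha} G \cong \coker \varphi_1$ is a map into the cokernel of the surjection $E_0 \to \coker \varphi_1$; since $F_0$ is free and $E_0 \to \coker \varphi_1$ is surjective, projectivity of $F_0$ provides a lift $a_0 : F_0 \to E_0$ which, by construction, makes the induced diagram on cokernels commute, i.e., extends $\alpha$. For the inductive step, assume $a_0,\dots,a_{k-1}$ are constructed with $\varphi_j a_j = a_{j-1}\psi_j$ for $1 \le j \le k-1$. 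I then consider $a_{k-1}\psi_k : F_k \to E_{k-1}$ and want to lift it through $\varphi_k$. The key check is that $a_{k-1}\psi_k$ lands in $\im \varphi_k$. Since $(E,\varphi)$ is exact, this amounts to verifying that $\varphi_{k-1}a_{k-1}\psi_k = 0$ (for $k \geq 2$; the case $k=1$ is analogous, using that the image of $a_0\psi_1$ lies in $\ker(E_0 \to \coker\varphi_1)=\im\varphi_1$, which follows since $\psi_1$ vanishes in $\coker\psi_1$). Using the induction hypothesis, $\varphi_{k-1}a_{k-1}\psi_k = a_{k-2}\psi_{k-1}\psi_k = 0$ because $(F,\psi)$ is a complex. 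So $a_{k-1}\psi_k$ factors through $\im \varphi_k$, and projectivity of $F_k$ gives the desired lift $a_k : F_k \to E_k$.

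For the uniqueness statement, given $a, \tilde a$ both extending $\alpha$, the difference $a_0 - \tilde a_0 : F_0 \to E_0$ induces the zero map on the cokernels, so its image is contained in $\ker(E_0 \to \coker \varphi_1) = \im \varphi_1$ by exactness of $(E,\varphi)$. Thus $a_0 - \tilde a_0$ lifts through the surjection $\varphi_1 : E_1 \to \im \varphi_1$, again by projectivity of $F_0$, yielding $s_0 : F_0 \to E_1$ with $\varphi_1 s_0 = a_0 - \tilde a_0$.

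There is no real obstacle here; the proposition is one of the foundational liftability results for projective (in particular free) resolutions, and every step reduces to applying the universal property of projectives to a surjection whose kernel condition is verified using the complex/exactness identities. The only point that requires a moment's care is the base case $k=1$ of the inductive argument, where one must check explicitly that $a_0\psi_1$ lands in $\im\varphi_1$ using that $a_0$ extends $\alpha$, rather than invoking a previous $a_{-1}$; after that the inductive machine runs uniformly.
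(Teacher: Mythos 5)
Your proposal is correct and is exactly the standard inductive lifting argument that the paper invokes (it simply cites Eisenbud, Proposition~A3.13, and calls it a straightforward diagram chase). No differences worth noting.
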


The existence of $a$ follows from defining it inductively by a relatively
straightforward diagram chase, see \cite{Eis}*{Proposition~A3.13},
and the existence of $s_0$ follows by a similar argument.

The residue currents associated with $(E, \varphi)$
and $(F, \psi)$ are
related by the following comparison formula, see \cite{LComp}*{Theorem~3.2}. 

\begin{thm} \label{thmRcomparison}
Assume that $H$ and $G$ are two finitely generated $\Ok_{X,\zeta}$-modules 
with Hermitian free resolutions
    $(F,\psi)$ and $(E,\varphi)$, respectively.
    If $a : (F,\psi) \to (E,\varphi)$ is a morphism of complexes, then
    \begin{equation} \label{eq:comparisongeneral}
R^Ea_0 - aR^F = \nabla M
    \end{equation}
where $M$ is a pseudomeromorphic $\Hom(F_0,E)$-valued current with support on $\supp H \cup \supp G$.
 \end{thm}

If we write $M = \sum M_\ell$, where $M_\ell$ is the part of $M$ with values
in $\Hom(F_0,E_\ell)$, and if $H$ and $G$ have codimension $\geq k$,
then $M_k = 0$ by \cite{LComp}*{Corollary ~3.6}. 
In particular, if $H$ and $G$ have codimension $p$, then \eqref{eq:comparisongeneral}
implies that (by taking the $\Hom(F_0, E_p)$-valued part) 
\begin{equation} \label{eq:comparisoncodimp}
R^E_p a_0 = a_p R^F_p+\varphi_{p+1} M_{p+1}.  
\end{equation}
If, in addition, $G$ is Cohen-Macaulay, i.e., it has a free resolution
of length $p$, and $(E, \varphi)$ is such a 
resolution, then  
\begin{equation} \label{eq:comparisoncm}
R^E_p a_0 = a_p R^F_p.
\end{equation}
Finally, we will also need to consider the situation when $G$ is Cohen-Macaulay,
but when the free resolution does not have minimal length $p$. The following
Lemma follows from \cite{LComp}*{Lemma~3.3 and Corollary~3.6}. 

\begin{lma} \label{lmampplus1}
    We use the notation from Theorem~\ref{thmRcomparison}. Assume that
    $H$ and $G$ have codimension $p$ and moreover that $G$ is Cohen-Macaulay.
    Then
    \begin{equation*}
        M_{p+1} = -\sigma_{p+1}^E a_p R^F_p, 
    \end{equation*}
where $\sigma_{p+1}^E$ is smooth. 
\end{lma}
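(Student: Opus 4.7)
The plan is to start from the explicit formula for $M$ given in Theorem~\ref{thmRcomparison} and extract the $\Hom(F_0, E_{p+1})$-valued component, which corresponds to $\ell = p+1$. This yields
\begin{equation*}
M_{p+1} = R\Big( \sigma^E_{p+1} \sum_{k=1}^{p} \dbar\sigma^E_p \cdots \dbar\sigma^E_{k+1}\, a_k\, \sigma^F_k \dbar\sigma^F_{k-1} \cdots \dbar\sigma^F_1 \Big),
\end{equation*}
where the leading $\dbar\sigma^E$-block is understood to be empty when $k = p$.

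First I would verify that $\sigma^E_{p+1}$ is smooth. Since $G$ is Cohen-Macaulay of codimension $p$, its minimal free resolution has length exactly $p$; by the invariance of the BEF-varieties $Z^E_k$ under the choice of resolution noted in Section~\ref{joo}, we get $Z^E_{p+1} = \emptyset$. Hence $\varphi_{p+1}$ has constant rank on $X$ and its minimal inverse $\sigma^E_{p+1}$ is smooth. Using that $\sigma^E_{p+1}$ is smooth of total superdegree $1$, I would apply \eqref{ljusrod} to factor it out of $R$ with a sign:
\begin{equation*}
M_{p+1} = -\sigma^E_{p+1}\, R\Big( \sum_{k=1}^{p} \dbar\sigma^E_p \cdots \dbar\sigma^E_{k+1}\, a_k\, \sigma^F_k \dbar\sigma^F_{k-1} \cdots \dbar\sigma^F_1 \Big).
\end{equation*}
The $k = p$ summand is $a_p\, \sigma^F_p \dbar\sigma^F_{p-1} \cdots \dbar\sigma^F_1$, and by \eqref{ljusrod} applied to the smooth factor $a_p$ of total superdegree $0$, its residue is exactly $a_p R^F_p$, which gives the required contribution after multiplication by $-\sigma^E_{p+1}$.

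The heart of the proof is therefore to show that each term with $k < p$ contributes zero to the residue. The natural mechanism is that once $R$ is moved past the $\dbar\sigma^E_j$ factors onto the trailing block $\sigma^F_k \dbar\sigma^F_{k-1} \cdots \dbar\sigma^F_1$, it produces $R^F_k$, which has bidegree $(0,k)$ with support on $W = \supp H$ of codimension $p > k$ and hence vanishes by the dimension principle (Proposition~\ref{proppmdim}). The main obstacle is that the factors $\dbar\sigma^E_j$ for $j \leq p$ are residue-type currents supported on $V$, not smooth forms, so \eqref{ljusrod} cannot be invoked directly to commute $R$ past them. I would try to handle this by repeated use of the minimal-inverse identity $\sigma^E_j \varphi_j + \varphi_{j+1} \sigma^E_{j+1} = \Id$ (valid outside the BEF-varieties) combined with the morphism condition $\varphi_j a_j = a_{j-1} \psi_j$ and the relation $\psi_j R^F_j = \dbar R^F_{j-1}$ coming from $\nabla R^F = 0$; since $R^F_j = 0$ for $j < p$ by the dimension principle, iterating these identities should reduce every $k < p$ term to a form to which the dimension principle applies, giving the desired vanishing.
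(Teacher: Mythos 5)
Your setup is right and matches the paper: the extraction of the $\ell=p+1$ component, the observation that Cohen-Macaulayness of $G$ forces $Z^E_{p+1}=\emptyset$ so that $\sigma^E_{p+1}$ is smooth, and the treatment of the $k=p$ summand via \eqref{ljusrod} (with the correct sign coming from the odd degree of $\sigma^E_{p+1}$) all agree with the paper's argument and are fine.

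The gap is in the $k<p$ terms, which is the actual content of the lemma. You correctly identify the obstacle — the factors $\dbar\sigma^E_j$, $j\leq p$, are not smooth, so the residue cannot be commuted past them — but you do not overcome it; the proposed iteration of $\sigma^E_j\varphi_j+\varphi_{j+1}\sigma^E_{j+1}=\Id$ together with $\nabla R^F=0$ is left as a hope ("should reduce every $k<p$ term...") and is not obviously workable without essentially re-deriving the comparison formula. The paper's resolution is a short, specific trick that your sketch misses: every $k<p$ summand carries the prefix $\sigma^E_{p+1}\dbar\sigma^E_p$, and since by construction $\sigma^E_{p+1}\sigma^E_p=0$, one has $\sigma^E_{p+1}\dbar\sigma^E_p=\dbar\sigma^E_{p+1}\,\sigma^E_p$. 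This transfers the $\dbar$ onto the \emph{smooth} factor $\sigma^E_{p+1}$, so $\dbar\sigma^E_{p+1}$ (smooth, of even total degree) can be pulled out of $R$ by \eqref{ljusrod}, leaving $\dbar\sigma^E_{p+1}\wedge R(\sigma^E_p\alpha)$ where $\alpha$ collects the remaining factors. The current $R(\sigma^E_p\alpha)$ is pseudomeromorphic of bidegree $(0,p-1)$ with support on $(\supp H)\cup(\supp G)$, which has codimension $p$, so it vanishes by the dimension principle, Proposition~\ref{proppmdim}. Note that the vanishing is obtained this way directly — by a degree count on the residue of the whole tail — rather than by first identifying the tail's residue with $R^F_k$ as your sketch suggests. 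Without this $\dbar$-transfer step (or a worked-out substitute), the proof is incomplete.
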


\subsection{Matrix notation}\label{matrisnot}

For a section $\gamma$ of $\E^\bullet(\End E)$ (or $\mathcal{C}^\bullet(\End E)$), let $\{ \gamma \}$
denote the matrix representing $\gamma$ in a local frame of $E$.

From \eqref{circa} it follows that if $\beta$ and $\gamma$ are sections of $\mathcal{E}^\bullet(\End E)$, then
\begin{equation} \label{eqmatrixmultsign}
    \{\beta \gamma \} = (-1)^{(\deg_e \beta)(\deg_f \gamma)} \{\beta\}\{\gamma\}.
\end{equation}
If we consider the main formula \eqref{eqmain} as a product of matrices in a local frame, then by
repeatedly using \eqref{eqmatrixmultsign}, the formula becomes
\begin{equation*}
    [Z] = \frac{1}{(2\pi i)^p p!} (-1)^{p(p-1)/2+p^2} \{D\varphi_1\}\cdots \{D\varphi_p\}\{R^E_p\}.
\end{equation*}
In \cite{LW1}, we explicitly computed the current $D\varphi_1 \cdots D\varphi_p R^E_p$, 
when $(E,\varphi)$ is a certain free resolution of a 2-dimensional Artinian monomial ideal,
by multiplying matrices, and this is the reason for why the constant $C_p = (-1)^{p(p-1)/2+p^2}=(-1)^{\lceil p/2 \rceil}$ appeared in
\cite{LW1}*{(7.4)}. 

When $(E,\varphi)$ is the Koszul complex of a tuple $(f_1,\dots,f_p)$ of holomorphic
functions defining a complete intersection ideal $\mathcal J(f)$ of codimension $p$, then
\begin{equation} \label{eqplcomputation}
\begin{gathered}
\frac{1}{(2\pi i)^p p!}\{ D\varphi_1 \cdots D\varphi_p R^E_p\} =
    \frac{1}{(2\pi i)^p p!} (-1)^{p^2} \{ D\varphi_1 \cdots D\varphi_p\}\{ R^E_p\} = \\
=
    \frac{1}{(2\pi i)^p } (-1)^{p^2} df_1 \wedge \dots \wedge df_p \wedge 
    \dbar \frac{1}{f_p} \wedge \dots \wedge \dbar\frac{1}{f_1} = \\
=
    \frac{1}{(2\pi i)^p }
    \dbar \frac{1}{f_p} \wedge \dots \wedge \dbar\frac{1}{f_1} \wedge
    df_1 \wedge \dots \wedge df_p = [Z],
\end{gathered} 
\end{equation}
where $\Ok_Z=\Ok_X/\mathcal J(f)$ and 
where we have used \eqref{eq:Dkoszul} and \eqref{eq:bmch} in the
second equality 
and the Poincar\'e-Lelong formula, \eqref{eqpl}, in the last equality.

Assume that $\beta$ and $\gamma$ are $\Hom(E_k,E_\ell)$- and
$\Hom(E_\ell, E_k)$-valued forms, respectively. 
Using that for scalar-valued $(i\times j)$- and $(j\times i)$-matrices $B$ and $C$, $\tr (BC) = \tr (CB)$,
together with \eqref{eqmatrixmultsign}, one gets that
\begin{align*} 
    \tr \{ \beta\gamma \} &= (-1)^{(\deg_e \beta) (\deg_f \gamma) } \tr \big(\{\beta\} \{\gamma\} \big)= \\
    &= (-1)^{(\deg_e \beta) (\deg_f \gamma) + (\deg_f \beta)(\deg_f \gamma)} \tr \big(\{\gamma\} \{\beta\} \big)= \\
    &= (-1)^{(\deg_e \beta) (\deg_f \gamma) + (\deg_f \beta)(\deg_f \gamma) + (\deg_e \gamma)(\deg_f \beta)} \tr \{\gamma\beta\}.
\end{align*}
Hence,
\begin{equation}
    \label{eq:trABBA}
    \tr \big(\beta\gamma \big )= (-1)^{(\deg\beta)(\deg \gamma) - (\deg_e \beta)(\deg_e \gamma)} \tr \big (\gamma\beta\big).
\end{equation}
Note that both \eqref{eqmatrixmultsign} and \eqref{eq:trABBA} hold
also when either $\beta$ or $\gamma$ is a section of $\mathcal
C^\bullet (\End E)$.

\section{Universal free resolutions} \label{sectkoszul}

A key ingredient in the proof of Theorem~\ref{thmmain2} is a specific universal free
resolution of $\Ok_{Z,\zeta}$ for $\zeta$ where $Z$ is Cohen-Macaulay.
It is in general far from minimal, but on
the other hand the construction is explicit.  
The universal free resolution, which is a Koszul complex over a certain ring $A$ that we describe
below, is a special case of a universal free resolution of
Cohen-Macaulay ideals due to Scheja and Storch, \cite{SS}*{p. 87--88}, and
Eisenbud, Riemenschneider and Schreyer, \cite{ERS}*{Theorem~1.1 and Example~1.1},
who however do this in an algebraic setting.

In order to prove Theorem~\ref{thmmain2}, it will be enough to have a free resolution generically on $Z$.
Generically on $Z$, a Noether normalization $\pi : Z \to W$ is given by a projection to $W := Z_{\rm red}$,
and one can there describe $\Ok_{Z,\zeta}$ as a free $\Ok_{W,\zeta}$-module
in an explicit way, see Lemma~\ref{lmagoodcoord2}.
In Lemma~\ref{lma:mainKoszul}, which we use to prove Theorem~\ref{thmmain2},
we will use this description of $\Ok_{Z,\zeta}$ as a free $\Ok_{W,\zeta}$-module.
In this case, we can give a direct proof that the construction of \cite{ERS}
and \cite{SS} indeed gives a free resolution of $\Ok_Z$; this is Theorem~\ref{thmkoszul}.

\subsection{The ring $A$} \label{ssect:ringA}

For a tuple $\alpha = (\alpha_1,\dots,\alpha_p) \in \Np^p$, where $\Np = \{0,1,2\dots\}$, we use
the multi-index notation $z^\alpha := z_1^{\alpha_1}\cdots z_p^{\alpha_p}$,
and, in addition, we let $|\alpha| := \alpha_1 + \cdots + \alpha_p$.

\begin{lma} \label{lmagoodcoord2}
    Let $X$ be a complex manifold of dimension $N$, and assume that $\mathcal{J} \subset \Ok_X$ is the defining ideal
    of an analytic subspace $Z$ of $X$, of pure codimension $p$, and let $n = N-p$.
    Let $W = Z_{\rm red}$ and assume that  $\xi \in W_{\rm reg}$. 
    Assume that we near $\xi$ have coordinates $(z,w) \in \C^p \times \C^n$ on $X$,
    such that $(z,w)(\xi) = 0$ and that in these coordinates, $W = \{ z_1 = \dots = z_p = 0 \}$.
    Let $m$ denote the geometric multiplicity of $W$ in $Z$ near $\xi$.

    Then there exist a neighbourhood $U \subset W$ of $\xi$, a hypersurface $Y \subset U$, and tuples
    $\alpha^1,\dots,\alpha^m \in \Np^p$ such that for $\zeta \in U\setminus Y$,
    $\Ok_{Z,\zeta}$ is a free $\Ok_{W,\zeta}$-module
    with a basis $z^{\alpha^1},\dots,z^{\alpha^m}$.
    Moreover, the tuples $\alpha^i$ satisfy $|\alpha^1| \geq
    |\alpha^2| \geq \cdots \geq |\alpha^m|$ 
    and if we express any monomial 
    $z^\gamma$ in terms of the $z^{\alpha^i}$,
    $z^\gamma = \sum f_i(w) z^{\alpha^i} + \mathcal{J}$, then for
    all $i$ such that $f_i \not\equiv 0$, we have that $|\alpha^i| \geq |\gamma|$.
\end{lma}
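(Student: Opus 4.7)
My plan is to construct the basis by lifting monomial bases through the $\mathfrak{m}_z$-adic filtration of $\Ok_Z$, where $\mathfrak{m}_z := (z_1,\dots,z_p)$. The key observation is that the degree condition of the lemma is precisely the statement that the chosen basis respects this filtration: every $z^\gamma$ lies in $\mathfrak{m}_z^{|\gamma|}\Ok_Z$, and its representation in the basis should only involve elements coming from this submodule.

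First I would establish finiteness and the filtration. Near $\xi$, $W = Z_{\rm red}$ implies $\sqrt{\mathcal{J}} = \mathfrak{m}_z$ stalkwise, so by the Nullstellensatz for convergent power series (together with coherence) we have $\mathfrak{m}_z^N \subset \mathcal{J}$ for some $N$ after shrinking $U$. Since $\pi : Z \to W$, $(z,w) \mapsto w$, has fibre $\{\xi\}$ set-theoretically over $\pi(\xi)$, it is finite near $\xi$, and $\pi_*\Ok_Z$ is a coherent $\Ok_W$-sheaf. Now consider the decreasing filtration by $\Ok_W$-submodules $F^k := \pi_*(\mathfrak{m}_z^k \cdot \Ok_Z)$: then $F^0 = \pi_*\Ok_Z$, $F^N = 0$, and each $F^k/F^{k+1}$ is coherent and generated over $\Ok_W$ by the finitely many monomials $\{z^\gamma : |\gamma| = k\}$.

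Next I would invoke generic freeness. Each $F^k/F^{k+1}$ has a well-defined generic rank $r_k$ over $\Ok_W$ and is free of this rank outside a proper analytic subset. Using Nakayama at the generic point of $U$, I would select $r_k$ monomials $z^{\alpha^{k,1}},\dots,z^{\alpha^{k,r_k}}$ with $|\alpha^{k,j}| = k$ whose classes form a free basis of $F^k/F^{k+1}$ on the complement of the vanishing locus of the associated $r_k \times r_k$ change-of-basis minor. Taking $Y \subset U$ to be the union over $k$ of these vanishing loci, enlarged to a hypersurface if necessary, on $U\setminus Y$ each $F^k/F^{k+1}$ is freely generated by its chosen monomials. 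Since free modules are projective, each short exact sequence $0 \to F^{k+1} \to F^k \to F^k/F^{k+1} \to 0$ splits, so by downward induction from $F^N = 0$, each $F^k$ is freely generated on $U \setminus Y$ by the union of chosen monomials for $F^{k'}/F^{k'+1}$ with $k' \geq k$. In particular $\Ok_Z = F^0$ is free, and reordering all chosen monomials by decreasing degree gives $z^{\alpha^1},\dots,z^{\alpha^m}$ with $|\alpha^1|\geq\cdots\geq|\alpha^m|$ and $m = \sum_k r_k$, which coincides with the geometric multiplicity via the fibre characterization recalled in the introduction.

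The degree condition is then immediate: any $z^\gamma$ lies in $F^{|\gamma|}$, so on $U\setminus Y$ its unique representation in terms of the basis of $F^{|\gamma|}$, namely $\{z^{\alpha^i} : |\alpha^i| \geq |\gamma|\}$, shows that in $z^\gamma = \sum_i f_i(w) z^{\alpha^i} + \mathcal{J}$ only basis elements with $|\alpha^i| \geq |\gamma|$ can contribute. The main obstacle is the step of \emph{global} monomial selection: at each stalk of $U \setminus Y$ a monomial basis of $F^k/F^{k+1}$ is easy to produce by Nakayama, but the choice of monomials depends a priori on the stalk. The fix is to pick the monomials at the generic stalk of $U$ and observe that the resulting change-of-basis minor can only vanish on a proper analytic subset, which we then absorb into $Y$.
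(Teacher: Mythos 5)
Your argument is correct, and it reaches the lemma by a genuinely different (more structural) route than the paper. The paper's proof is bare-hands linear algebra over the domain $\Ok_{W,\xi}$: it lists the finitely many monomials $z^{a}$ with $a<\beta$ in order of decreasing total degree, greedily extracts a maximal $\Ok_{W,\xi}$-independent subset, and takes $Y$ to be the union of the zero sets of the leading coefficients in the dependence relations satisfied by the rejected monomials; the degree condition is then read off from the greedy ordering. You instead package the degree bookkeeping into the $\mathfrak{m}_z$-adic filtration $F^k=\pi_*(\mathfrak{m}_z^k\Ok_Z)$, use that each graded piece $F^k/F^{k+1}$ is a coherent $\Ok_W$-module generated by the monomials of degree exactly $k$ (via Taylor expansion in $z$), invoke generic local freeness to select monomial bases of the graded pieces away from a proper analytic subset, and assemble the basis of $\Ok_Z$ by splitting the filtration. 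What this buys is a transparent proof of the degree condition: it is literally the statement that the basis is adapted to the filtration, since $z^\gamma\in F^{|\gamma|}$ and $F^{|\gamma|}$ is free on the sub-basis of monomials of degree $\ge|\gamma|$. The cost is an appeal to coherence and generic freeness where the paper uses only the Nullstellensatz and independence over $\Ok_{W,\xi}$. Two points you gloss over are routine but worth stating: the monomials lifting a basis of $F^k/F^{k+1}$, together with a basis of $F^{k+1}$, do form a basis of $F^k$ (so the splitting can be taken along the monomials themselves), and the finitely many bad loci are proper analytic subsets of $U$ which, after shrinking $U$, can be enclosed in a hypersurface $Y$.
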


Note that if one considers a tuple $\beta \in \Np^p$, then, by the last statement
of the lemma, we have for each $j$,
\begin{equation} \label{eqzbetaalphaj}
    z^\beta z^{\alpha^j} = \sum_{i \leq j} f_i(w) z^{\alpha^i} + \mathcal{J},
\end{equation}
and if $\beta \neq 0$, then the sum can be taken just over $i < j$.

\begin{proof}
    By the Nullstellensatz in $\Ok_{X,\xi}$, we can choose $\beta_i$ such that
    $z_i^{\beta_i} \in \mathcal{J}$ for $i = 1,\dots,p$.
    In particular, the finite set of monomials $z^{\alpha}$ such that
    $\alpha_i < \beta_i$ for $i=1,\dots,p$ must generate $\Ok_{Z,\xi}$ as an $\Ok_{W,\xi}$-module.
    By coherence, these monomials also generate $\Ok_{Z,\zeta}$ as an $\Ok_{W,\zeta}$-module
    for $\zeta$ in some neighbourhood $U \subset W$ of $\xi$.

We let $a^i$ be an enumeration of the tuples $\alpha$ with $\alpha_k < \beta_k$ for $k=1,\dots,p$,
ordered so that $|a^i| \geq |a^j|$ if $i \leq j$.
We now choose $\alpha^1,\dots,\alpha^M$ inductively among the $a^i$
so that $z^{\alpha^1},\dots,z^{\alpha^M}$ are independent over $\Ok_{W,\xi}$
in the following way:
First, we let $\alpha^1 = a^{i_1}$, where $i_1$ is the first index $i$ such
that $f_1(w) z^{a^i} \equiv 0$ in $\Ok_{Z,\xi}$ implies that
$f_1 \equiv 0$. Then, if we have already chosen $\alpha^1,\dots,\alpha^k$,
$\alpha^j = a^{i_j}$, we define inductively $\alpha^{k+1} = a^{i_{k+1}}$ as the next
$a^i$ such that if
$f_1(w) z^{\alpha^1} + \cdots + f_k(w) z^{\alpha^k} + f_{k+1}(w) z^{a^i} \equiv 0$,
then $f_{k+1} \equiv 0$.
Clearly, $|\alpha^1| \geq \cdots \geq |\alpha^M|$.

Note that if $a^k$ is not among the $\alpha^i$, then there exists a relation
$f_k(w) z^{a^k} = \sum_{j: i_j < k} g_{k,j}(w) z^{\alpha^j}$ in $\Ok_{Z,\xi}$,
where $f_k \not\equiv 0$.
By possibly shrinking $U$, we can assume that all the $f_k$'s are defined on $U$.
Let $Y := \bigcup_{k \notin \{i_1,\dots,i_M\}} \{ f_k = 0 \}$. Then,
outside the hypersurface $Y$, any such $z^{a^k}$ can be expressed
uniquely in terms of $z^{\alpha^j}$ with $i_j < k$.
Thus, for $\zeta \in U \setminus Y$, $\Ok_{Z,\zeta}$ is a free $\Ok_{W,\zeta}$-module
with basis $z^{\alpha^1},\dots,z^{\alpha^M}$. Therefore $M=m$. 
In addition, since each $z^{a^k}$ not among the $z^{\alpha^i}$ can be written
in terms of $z^{\alpha^j}$, with $i_j < k$, by the ordering of the $a^i$,
those $\alpha^i$ will satisfy that $|\alpha^i|\geq |a^k|$.
\end{proof}

\begin{df} \label{def:A-module}
    We consider the situation in Lemma~\ref{lmagoodcoord2}. Given $\zeta \in U \setminus Y$,
    we define the $\Ok_{W,\zeta}$-module
    \begin{equation*}
        A  = A_\zeta := \Ok_{X,\zeta} \otimes_{\Ok_{W,\zeta}} \Ok_{Z,\zeta}.
    \end{equation*}
\end{df}

Note that by Lemma~\ref{lmagoodcoord2}, $\Ok_{Z,\zeta}$ is a
free $\Ok_{W,\zeta}$-module of rank $m$, so $A$ is a free $\Ok_{X,\zeta}$-module
of rank $m$, i.e., $A \cong \Ok_{X,\zeta}^{\oplus m}$.
We will denote an element $f \otimes g \in A$ by $f \cl{g}$.
We will also sometimes use the short-hand notation $f := f[1]$ and $[g] := 1[g]$.
Note that since $\Ok_{X,\zeta}$ and $\Ok_{Z,\zeta}$ are $\Ok_{W,\zeta}$-algebras, so is $A$,
and the multiplication is defined by $(f_1\cl{g_1})(f_2\cl{g_2}) = f_1f_2\cl{g_1g_2}$.

\begin{remark} \label{rem:upper-triangular}
    Using the notation from above, for $\zeta \in U\setminus Y$, we have
    a basis $z^{\alpha^1},\dots,z^{\alpha^m}$ of $\Ok_{Z,\zeta}$ as a free $\Ok_{W,\zeta}$-module.
    This gives a basis $\cl{z^{\alpha^1}},\dots,\cl{z^{\alpha^m}}$ of $A$ as a free $\Ok_{X,\zeta}$-module.
    If $z^\gamma$ is a monomial, then we can consider (multiplication
    with) $\cl{z^\gamma}$ as an element in $\End_{\Ok_{X,\zeta}}(A)$,
    and the matrix of $\cl{z^\gamma}$ with respect to the basis $\cl{z^{\alpha^1}},\dots,\cl{z^{\alpha^m}}$
    from Lemma~\ref{lmagoodcoord2} is upper triangular by \eqref{eqzbetaalphaj}, and it has zeros
    along the diagonal unless $\gamma = 0$, in which case $\cl{z^\gamma}$ is the identity matrix.
\end{remark}

\subsection{Universal free resolutions} \label{ssectkoszul}

Let $R$ be a commutative ring, and let $x_1,\dots,x_p$ be elements of $R$.
To fix notation, we remind that the \emph{Koszul complex} of $x = (x_1,\dots,x_p)$
is the complex $(\bigwedge^\bullet R^{\oplus p},\delta_x)$, where the differential $\delta_x$ is defined
by inner multiplication with $x$, i.e., if we choose as a standard basis $e_1,\dots,e_p$
of $R^{\oplus p}$, then
\begin{equation*}
    \delta_x : e_I \mapsto \sum_{i=1}^k (-1)^{i-1} x_{I_i} e_{I\setminus I_i},
\end{equation*}
where $I = (I_1,\dots,I_k)$, and we use the short-hand notation
$e_I = e_{I_1}\wedge\cdots\wedge e_{I_k}$.
In particular, we use the notation $e_\emptyset$ for the basis of $\bigwedge^0 R^{\oplus p} \cong R$.
If the sequence $x$ is a regular sequence, then it is well-known that $(\bigwedge^\bullet R^{\oplus p},\delta_x)$ is
a free resolution of $R/(x_1,\dots,x_p)$, see for example \cite{Eis}*{Corollary~17.5}.
When $R = \Ok_{X,\zeta}$, then $f = (f_1,\dots,f_p)$ is a regular sequence 
if and only if $\codim \{ f_1 = \cdots = f_p = 0 \} = p$. 
Hence, for complete intersection ideals, we have an explicitly defined free resolution.
The universal free resolution gives an explicit free resolution for more
general ideals in $\Ok_{X,\zeta}$, but then one considers a Koszul complex over the ring $A$ instead of over $\Ok_{X,\zeta}$.

\begin{thm} \label{thmkoszul}
    Assume that we are in the situation of Lemma~\ref{lmagoodcoord2}, and that we fix
    some $\zeta \in U \setminus Y$. Let $A$ be as in Definition~\ref{def:A-module},
    and let $\mathbf{z}_i := z_i - \cl{z_i} \in A$ for $i = 1,\dots,p$.
    Then, the Koszul complex $(K,\phi) := (\bigwedge^\bullet A^{\oplus p},\delta_{\mathbf{z}})$ of
    $\mathbf{z} := (\mathbf{z}_1,\dots,\mathbf{z}_p)$ is a free
    resolution of $\Ok_{Z,\zeta}$ over $A$ and $\Ok_{X,\zeta}$. 
\end{thm}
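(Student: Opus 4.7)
The plan is to reduce Theorem~\ref{thmkoszul} to the classical fact that the Koszul complex on a regular sequence in a commutative ring is a free resolution of the quotient. Concretely, I would establish two things: (a) that $A/(\mathbf{z}_1,\dots,\mathbf{z}_p)A \cong \Ok_{Z,\zeta}$, and (b) that $\mathbf{z} = (\mathbf{z}_1,\dots,\mathbf{z}_p)$ is a regular sequence in $A$. Given (a) and (b), the Koszul complex $(K,\phi)$ is a resolution of $\Ok_{Z,\zeta}$ by free $A$-modules, and since $A \cong \Ok_{X,\zeta}^{\oplus m}$ as an $\Ok_{X,\zeta}$-module by Lemma~\ref{lmagoodcoord2} and Definition~\ref{def:A-module}, each $\bigwedge^k A^{\oplus p}$ is automatically free of finite rank over $\Ok_{X,\zeta}$, so the free resolution over $\Ok_{X,\zeta}$ follows simultaneously.

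For (a), I would consider the natural $\Ok_{X,\zeta}$-algebra surjection $\sigma : A \to \Ok_{Z,\zeta}$, $f \otimes g \mapsto \overline{f} g$, which annihilates each $\mathbf{z}_i$ and therefore induces a surjection $\bar\sigma : A/(\mathbf{z})A \to \Ok_{Z,\zeta}$; injectivity of $\bar\sigma$ is the main task. An induction on $|\alpha|$ using
\begin{equation*}
    z_i z^{\alpha - e_i} - [z_i][z^{\alpha - e_i}] = \mathbf{z}_i z^{\alpha - e_i} + [z_i]\bigl(z^{\alpha - e_i} - [z^{\alpha - e_i}]\bigr)
\end{equation*}
shows that $z^\alpha \equiv [z^\alpha] \pmod{(\mathbf{z})A}$ for every $\alpha \in \Np^p$, so every basis vector $[z^{\alpha^i}]$ of $A$ is congruent modulo $(\mathbf{z})A$ to an element of the image of the structure map $\iota : \Ok_{X,\zeta} \to A$. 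Hence $\iota$ induces a surjection $\Ok_{X,\zeta} \twoheadrightarrow A/(\mathbf{z})A$ whose kernel is contained in $\mathcal{J}$ (the composition with $\bar\sigma$ being the quotient $\Ok_{X,\zeta} \to \Ok_{Z,\zeta}$), and for the reverse inclusion it suffices to prove the congruence $f \otimes 1 \equiv 1 \otimes \overline{f} \pmod{(\mathbf{z})A}$ for every $f \in \Ok_{X,\zeta}$. The set of such $f$ is an $\Ok_{W,\zeta}$-subalgebra of $\Ok_{X,\zeta}$ containing each $z_i$ (by definition of $\mathbf{z}_i$) and each $w_j$ (since $w_j \otimes 1 = 1 \otimes w_j$ in the $\Ok_{W,\zeta}$-balanced tensor product), hence all of $\Ok_{W,\zeta}[z_1,\dots,z_p]$; I would extend from there to arbitrary holomorphic germs via the Hadamard-type identity $g(z,w) - g([z],w) = \sum_i \mathbf{z}_i h_i(z,[z],w)$ interpreted inside $A$, which is legitimate because finite generation of $A$ over $\Ok_{X,\zeta}$ forces each $[z_i]$ to be integral over $\Ok_{X,\zeta}$, so that convergent power series in $[z]$ reduce to well-defined elements of $A$.

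For (b), note that $A$ is free of finite rank over the regular local ring $\Ok_{X,\zeta}$ and is itself a local ring, because $A/\mathfrak{m}_{X,\zeta}A \cong \Ok_{Z,\zeta}/\mathfrak{m}_{W,\zeta}\Ok_{Z,\zeta}$ is the Artinian local fiber of $Z \to W$ at $\zeta$. Hence $A$ is Cohen-Macaulay of Krull dimension $N$. By (a), $A/(\mathbf{z})A \cong \Ok_{Z,\zeta}$ has Krull dimension $n = N - p$, so $\mathbf{z}_1,\dots,\mathbf{z}_p$ cuts the dimension of $A$ by exactly $p$; in a Cohen-Macaulay local ring any sequence of $p$ elements in the maximal ideal that cuts the Krull dimension by $p$ is automatically regular (see, e.g., \cite{Eis}*{Theorem~17.4 and Corollary~17.7}). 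This completes (b), and with it the theorem. The hard part will be the extension step in (a), going from the tautological congruence $f \otimes 1 \equiv 1 \otimes \overline{f} \pmod{(\mathbf{z})A}$ on polynomial germs to arbitrary holomorphic germs, since this is precisely where the analytic (rather than purely algebraic) setting enters; it is the integrality of each $[z_i]$ over $\Ok_{X,\zeta}$, stemming from the finite generation of $A$, that makes the Hadamard substitution $z \mapsto [z]$ meaningful inside $A$ and supplies the missing congruence.
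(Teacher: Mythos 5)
Your proposal is correct in outline but takes a genuinely different route from the paper on the exactness part. For the identification $\coker\phi_1\cong A/(\mathbf z)A\cong\Ok_{Z,\zeta}$ your argument and the paper's have the same content (both ultimately rest on reducing an arbitrary germ $f$ modulo $(\mathbf z)A$ to an element of the form $\cl{g}$), though the paper does it more directly: a \emph{finite} Taylor expansion $f=\sum_{\alpha\le\beta-\mathbf 1}f_\alpha(w)z^\alpha+\sum_i z_i^{\beta_i}f_i$ combined with the telescoping identity $z_i^k=\cl{z_i^k}+(\cdots)\mathbf z_i$ and $\cl{z_i^{\beta_i}}=0$. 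For exactness in positive degrees the paper proves that $\phi_1$ is pointwise surjective and the complex pointwise exact outside $W=\{z=0\}$ (using the explicit finite inverse $\gamma_i=\sum_k z_i^{-k-1}\cl{z_i^k}$ of $\mathbf z_i$ where $z_i\neq0$), and then invokes the Buchsbaum--Eisenbud acyclicity criterion; you instead show that $A$ is a Cohen--Macaulay local ring of dimension $N$ (local because the fiber $A/\mathfrak m_{X,\zeta}A\cong\Ok_{Z,\zeta}/\mathfrak m_{W,\zeta}\Ok_{Z,\zeta}$ is Artinian local and $A$ is module-finite over $\Ok_{X,\zeta}$; CM because it is free over the regular local ring $\Ok_{X,\zeta}$) and that $\mathbf z$ cuts the dimension by exactly $p$, hence is a regular sequence. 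Both arguments are sound; yours is closer in spirit to the algebraic treatments of Scheja--Storch and Eisenbud--Riemenschneider--Schreyer, while the paper's avoids any discussion of the ring structure of $A$ beyond its freeness as an $\Ok_{X,\zeta}$-module.

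One justification in your step (a) needs repair. Integrality of $\cl{z_i}$ over $\Ok_{X,\zeta}$ does \emph{not} by itself make a convergent power series evaluated at $\cl{z}$ meaningful: there is no completeness available to sum an infinite series in $A$. What actually makes the Hadamard substitution legitimate is that each $\cl{z_i}$ is \emph{nilpotent}: by the Nullstellensatz one may choose $\beta_i$ with $z_i^{\beta_i}\in\mathcal J$, whence $\cl{z_i}^{\beta_i}=\cl{z_i^{\beta_i}}=0$ in $A$, so every power series in $\cl{z}$ truncates to a polynomial of multidegree $<\beta$. This is exactly the mechanism the paper uses, and with it your extension from $\Ok_{W,\zeta}[z_1,\dots,z_p]$ to all of $\Ok_{X,\zeta}$ goes through.
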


For tuples $\gamma,\eta \in \Np^p$ we use the partial ordering
that $\gamma \leq \eta$ if and only if $\gamma_i \leq \eta_i$ for $i=1,\dots,p$.
We also use the short-hand notation ${\bf 1} = (1,\dots,1) \in \Np^p$.

For the convenience of the reader, we provide a proof of Theorem~\ref{thmkoszul} in our situation.
In \cite{SS} and \cite{ERS}, the corresponding theorem is in an algebraic setting, and does not
immediately apply to our setting, although it should be possible to adapt the proof to our setting
using analytic tensor products (cf., Section~2 in \cite{ABM}).

\begin{proof}
    By construction, $K$ consists of free $A$-modules, and since, as explained above,
    $A$ is a free $\Ok_{X,\zeta}$-module, $K$ is also a complex of free $\Ok_{X,\zeta}$-modules.
    Exactness is independent of whether we consider the complex as $\Ok_{X,\zeta}$-modules
    or $A$-modules, so it is sufficient to prove that $(K,\phi)$
    is a free resolution as $\Ok_{X,\zeta}$-modules.

    We first prove that $\coker \phi_1 \cong \Ok_{Z,\zeta}$.
    We get a surjective mapping $\pi : K_0 \to \Ok_{Z,\zeta}$ by letting
    $\pi(f\cl{g}) := fg$. Note that $\pi(\mathbf{z}_i) = 0$ for $i = 1,\dots,p$,
    so we get a well-defined induced mapping $\tilde{\pi} : K_0/(\im \phi_1) \to \Ok_{Z,\zeta}$.
    Clearly, $\tilde{\pi}$ is surjective since $\pi$ is surjective.
    Next, we claim that
    \begin{equation} \label{eqfg}
        f [g] = [fg] + \sum \mathbf{z}_i \eta_i,
    \end{equation}
    for some $\eta_i \in A$, $i=1,\dots,p$.
    To prove the claim, we first choose $\beta_i$ such that $z_i^{\beta_i} = 0$ in $\Ok_{Z,\zeta}$ for $i=1,\dots,p$, which 
    is possible by the Nullstellensatz.
    We then make a finite Taylor expansion of $f$,
    \begin{equation*}
        f = \sum_{\alpha \leq \beta-{\mathbf 1}} f_\alpha(w) z^\alpha + \sum_{i=1}^p z_i^{\beta_i} f_i(z,w).
    \end{equation*}
    Using this Taylor expansion, in combination with the formula
    \begin{equation*}
        z_i^k = \cl{z_i^k} + (z_i^{k-1} + z_i^{k-2} \cl{z_i} + \dots + \cl{z_i^{k-1}})(z_i - \cl{z_i}),
    \end{equation*}
    and the fact that $\cl{z_i^{\beta_i}} = 0$ and $f_\alpha(w)\cl{g} = \cl{f_\alpha(w) g}$,
    we get that $f[g]$ is of the form \eqref{eqfg}.
    If $\tilde{\pi}(\sum f_i \cl{g_i} e_\emptyset) = 0$, then $\sum f_i g_i = 0$ in $\Ok_{Z,\zeta}$,
    and by \eqref{eqfg}, 
\[
\sum f_i \cl{g_i} e_\emptyset = \sum\mathbf z_j \eta_j e_\emptyset = \phi_1
\eta, 
\]
    for some $\eta =(\eta_1,\ldots, \eta_p)\in K_1$,
    i.e., $\sum f_i \cl{g_i} = 0$ in $\coker \phi_1$, so
    $\tilde{\pi}$ is injective. We thus get that $\coker \phi_1 \cong \Ok_{Z,\zeta}$.

    It remains to see that $(K,\phi)$ is exact at levels $k \geq 1$.
    In order to prove this, we first prove that $\phi_1$ is pointwise surjective outside of
    $W = \{ z_1 = \dots = z_p = 0 \}$. If $(z,w) \notin W$, we can assume that, say, $z_i \neq 0$. Then
    $\mathbf{z}_i$ is invertible, with inverse
    \begin{equation*}
        \gamma_i := \sum_{k=0}^\infty \frac{1}{z_i^{k+1}}\cl{z_i^{k}},
    \end{equation*}
    where the series is in fact a finite sum, since $z_i^k = 0$ in $\Ok_{Z,\zeta}$
    for $k \gg 1$ by the Nullstellensatz.
    Then, $\phi_1(f[g]\gamma_i e_i) = f[g] e_\emptyset$, so $\phi_1$ is surjective as a morphism of sheaves.
    Since the image is $K_0$, which is a vector bundle, it is also pointwise surjective.
    To conclude, $\phi_1$ is pointwise surjective outside of $W$, i.e., $Z^K_1 \subset W$,
    where $Z^K_1$ is the first singularity subvariety associated to $(K,\phi)$.

    We next prove that the complex is exact as a complex of sheaves at level $k \geq 1$ outside of $W$.
    As above, if $(z,w)$ is outside of $W$, and, say, $z_i \neq 0$, and $\alpha \in K_k$ is such that
    $\phi_k  \alpha = 0$, then $\phi_{k+1} (\gamma_i e_i \wedge \alpha) = (\delta_{\mathbf{z}} \gamma_i e_i)\wedge \alpha = \alpha$,
    so the complex is exact as a complex of sheaves outside of $W$. For a free resolution $(E,\varphi)$,
    $Z_{k+1}^E \subset Z_k^E$, for $k \geq 1$, see \cite{Eis}*{Corollary~20.12}.
    Hence, $Z_k^K\setminus W \subset Z_1^K\setminus W = \emptyset$, i.e., $Z_k^K \subset W$ for $k \geq 1$.

    To conclude, the complex $(K,\phi)$ of length $p$ is pointwise exact outside of $W$,
    which has codimension $p$.
    Thus, it is exact as a complex of sheaves by the Buchsbaum-Eisenbud criterion,
    \cite{Eis}*{Theorem~20.9}, because $\codim Z_k^K \geq p \geq k$ and 
    the pointwise exactness of $(K,\phi)$ outside of $W$ implies that
    $\rank K_k = \rank \phi_k + \rank \phi_{k+1}$.
\end{proof}

In general, for $\zeta \in U\setminus Y$, the universal free resolution of $\Ok_{Z,\zeta}$ is not minimal as a
free resolution of $\Ok_{X,\zeta}$-modules.
To see this, note that $K_0 \cong A$, so if $Z$ has geometric multiplicity $m > 1$ near $\zeta$,
then $\rank_{\Ok_{X,\zeta}} K_0 = m > 1$, while a minimal free resolution $(E,\varphi)$ of $\Ok_{Z,\zeta}$
would have $\rank_{\Ok_{X,\zeta}} E_0 = 1$.

\section{Proofs of Theorem~\ref{thmmain2} and Theorem~\ref{thmnonpure}} \label{sectmainproof}

A key part in the proof of Theorem~\ref{thmmain2}
is to prove that the currents on the left-hand sides of \eqref{eqmain2}
and \eqref{eqmain3} are independent of the choice of locally free resolution
$(E,\varphi)$ of $\Ok_Z$ and the choice of connections on $E_0,\dots,E_p$.

\begin{lma} \label{lmaindep}
    Let $G$ be a finitely generated $\Ok_{X,\zeta}$-module of codimension $p$,
    and let $(E,\varphi)$ and $(F,\psi)$ be Hermitian free resolutions of $G$.
    Then,
    \begin{equation} \label{eqindep}
        \tr (D\varphi_1 \cdots D\varphi_p R^E_p) = \tr (D\psi_1 \cdots D\psi_p R^F_p),
    \end{equation}
    where $D$ is the connection on $\End(E\oplus F)$ induced by arbitrary
    connections on $E_0,\dots,E_p$ and $F_0,\dots,F_p$.
\end{lma}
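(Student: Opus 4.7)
The plan is to compare the two residue currents via chain maps lifting $\Id_G$ together with the comparison formula (Theorem~\ref{thmRcomparison}), killing the resulting error terms through the annihilation identities $R^E_p\varphi_1=0$ and $\varphi_p R^E_p=0$ (equations \eqref{eqannright} and \eqref{eqannleft}) and the cyclic property of trace \eqref{eq:trABBA}. By Proposition~\ref{propcomplexcomparison}, I first pick morphisms of complexes $a:(F,\psi)\to(E,\varphi)$ and $b:(E,\varphi)\to(F,\psi)$ each extending $\Id_G$; the same proposition supplies $a_0b_0-\Id_{E_0}=\varphi_1 s_0$ and $b_0a_0-\Id_{F_0}=\psi_1 t_0$ for some $s_0$, $t_0$. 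Viewing $a$, $b$, $s_0$, $t_0$ as (off-diagonal) sections of $\End(E\oplus F)$, the connection $D$ acts on all of them.

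\textbf{Main computation.} Since $R^E_p\varphi_1=0$, we have $R^E_p\,a_0b_0=R^E_p$; together with the cyclic trace identity this yields
\begin{equation*}
\tr\bigl(D\varphi_1\cdots D\varphi_p R^E_p\bigr) \;=\; \pm\,\tr\bigl(b_0\,D\varphi_1\cdots D\varphi_p\,R^E_p\,a_0\bigr).
\end{equation*}
Applying \eqref{eq:comparisoncodimp} to rewrite $R^E_p a_0 = a_p R^F_p + \varphi_{p+1}M_{p+1}$ splits the right-hand side into a ``main'' term and an ``error'' term. For the main term, I iterate the chain-map identity $\varphi_k a_k = a_{k-1}\psi_k$, differentiated via Leibniz \eqref{eqleibniz}, to migrate $a_p$ leftward through the $D\varphi_k$'s while converting each $D\varphi_k$ into $D\psi_k$; combining with cyclic trace, $b_0a_0=\Id_{F_0}+\psi_1 t_0$, and $R^F_p\psi_1=0$ produces $\pm\tr(D\psi_1\cdots D\psi_p R^F_p)$ up to correction pieces. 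Those corrections, together with the error term $\tr(b_0\,D\varphi_1\cdots D\varphi_p\,\varphi_{p+1}M_{p+1})$, are dispatched by analogous Leibniz pushings that either place a $\varphi_\bullet$ adjacent to $R^E_p$ (killed by $\varphi_p R^E_p=0$) or a $\psi_\bullet$ adjacent to $R^F_p$ (killed by $R^F_p\psi_1=0$), or else produce pseudomeromorphic currents to which the dimension principle (Proposition~\ref{proppmdim}) applies.

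\textbf{Main obstacle.} The principal hurdle is the vanishing of the $\varphi_{p+1}M_{p+1}$-contribution. After pushing $\varphi_{p+1}$ through the $D\varphi_k$'s via Leibniz and $\varphi_k\varphi_{k+1}=0$ and applying the chain-map identity $b_0\varphi_1=\psi_1 b_1$, one arrives at a pseudomeromorphic $(p,p)$-current supported in codimension $p$, for which Proposition~\ref{proppmdim} does not apply directly. Dispatching it will require exploiting the explicit expression for $M$ in Theorem~\ref{thmRcomparison} together with the $\nabla$-closedness of $R^E$ and $R^F$ to rewrite the current as a sum of currents of strictly smaller antiholomorphic degree, where the dimension principle does apply. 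A secondary but non-trivial difficulty is the meticulous sign bookkeeping through the iterated Leibniz moves and cyclic permutations, for which the matrix conventions of Section~\ref{matrisnot} are indispensable.
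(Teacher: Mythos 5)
Your overall skeleton is the same as the paper's: lift $\Id_G$ to chain maps $a$ and $b$ in both directions, insert $a_0b_0=\Id+\varphi_1 s_0$ using $R^E_p\varphi_1=0$, cycle the trace, apply the comparison formula \eqref{eq:comparisoncodimp}, and migrate $a_p$ (resp.\ $b_p$) through the $D\varphi_k$'s by differentiating $\varphi_k a_k=a_{k-1}\psi_k$ (this migration is exactly Lemma~\ref{lma:DpsiDphi} in the paper), killing the resulting $\varphi_1(\cdots)$ and $(\cdots)\varphi_p$ corrections with \eqref{eqannright}, \eqref{eqannleft} and the cyclic trace identity. That part of your argument is sound.

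The genuine gap is the one you yourself flag: the $\varphi_{p+1}M_{p+1}$ contribution. Your proposed fix — using $\nabla$-closedness to rewrite it as currents of lower antiholomorphic degree — is not carried out and I do not see how it would work: $M_{p+1}$ has bidegree $(0,p)$ and support of codimension exactly $p$, so no degree-counting argument kills it directly, and there is no general identity of the form $M_{p+1}\psi_1=0$ that would let you dispose of it by cycling. The missing idea is to reduce to the Cohen--Macaulay locus of $G$ \emph{before} invoking the comparison formula: the difference of the two sides of \eqref{eqindep} is a pseudomeromorphic $(p,p)$-current, and $G$ fails to be Cohen--Macaulay only on a set of codimension $\geq p+1$, so by the dimension principle (Proposition~\ref{proppmdim}) it suffices to prove the identity near Cohen--Macaulay points. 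There, Lemma~\ref{lmampplus1} applies (since $Z^E_{p+1}=\emptyset$, the inverse $\sigma^E_{p+1}$ is smooth) and gives the explicit form $M_{p+1}=-\sigma^E_{p+1}a_pR^F_p$; consequently $\varphi_{p+1}M_{p+1}$ ends in $R^F_p$, and after pushing $\varphi_{p+1}$ leftward via $D\varphi_k\varphi_{k+1}=\varphi_k D\varphi_{k+1}$ and using $b_0\varphi_1=\psi_1 b_1$, the whole term is a trace of the form $\tr(\psi_1\,\xi\,R^F_p)=\pm\tr(\xi\,R^F_p\psi_1)=0$. Without this reduction your argument does not close.
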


\begin{lma} \label{lmaindep2}
    Let $G$, $(E,\varphi)$, $(F,\psi)$ and $D$ be as in Lemma~\ref{lmaindep},
    and let $\eta$ and $\tau$ be the natural surjections
    $\eta : F_0 \to \coker \psi_1 \cong G$ and $\tau : E_0 \to \coker \varphi_1 \cong G$.
Then     
    \begin{equation} \label{eqindep2}
        \eta D\psi_1 \cdots D\psi_p R^F_p \eta^{-1} = \tau D\varphi_1 \cdots D\varphi_p R^E_p \tau^{-1}.
    \end{equation}
\end{lma}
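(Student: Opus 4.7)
My plan is to lift the identity $\Id_G$ to a morphism of complexes $a : (E,\varphi) \to (F,\psi)$ via Proposition \ref{propcomplexcomparison}, and then connect the two sides of \eqref{eqindep2} through the comparison formula \eqref{eq:comparisoncodimp} and a telescoping identity that interpolates between them. Since $a$ lifts $\Id_G$, we have $\eta a_0 = \tau$; and since $R^F_p \psi_1 = 0$ while $\ker \eta = \im \psi_1$, the $\Hom(G,F_p)$-valued current $R^F_p \eta^{-1}$ is well-defined and agrees with $R^F_p a_0 \tau^{-1}$. The comparison formula \eqref{eq:comparisoncodimp} then yields
\[
R^F_p \eta^{-1} = a_p R^E_p \tau^{-1} + \psi_{p+1} M_{p+1} \tau^{-1}.
\]
I would then apply $\eta D\psi_1 \cdots D\psi_p$ on the left. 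The $\psi_{p+1}$-term disappears by sliding $\psi_{p+1}$ leftwards across the $D\psi_j$'s via $D\psi_j \psi_{j+1} = \psi_j D\psi_{j+1}$ (which is \eqref{eqleibniz} applied to $\psi_j \psi_{j+1} = 0$; the super-signs cancel since $\deg \psi_j = 1$), producing the factor $\eta \psi_1 = 0$.

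It then remains to prove $\eta D\psi_1 \cdots D\psi_p a_p R^E_p = \tau D\varphi_1 \cdots D\varphi_p R^E_p$. For this I would set
\[
\Omega_k := \eta D\psi_1 \cdots D\psi_k \, a_k \, D\varphi_{k+1} \cdots D\varphi_p \qquad (k = 0,\ldots,p),
\]
so that $\Omega_0 = \tau D\varphi_1 \cdots D\varphi_p$ (via $\eta a_0 = \tau$) and $\Omega_p$ is the desired left-hand side. Using the cochain relation $\psi_k a_k = a_{k-1} \varphi_k$ together with \eqref{eqleibniz}, one finds
\[
D\psi_k a_k - a_{k-1} D\varphi_k = \psi_k Da_k + Da_{k-1} \varphi_k,
\]
so $\Omega_k - \Omega_{k-1}$ splits into two summands. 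In the first, $\psi_k$ slides leftwards to produce $\eta \psi_1 = 0$; in the second, $\varphi_k$ slides rightwards across the $D\varphi_j$'s via the analogous identity $\varphi_k D\varphi_{k+1} = D\varphi_k \varphi_{k+1}$, yielding $\varphi_p R^E_p = 0$, cf.\ \eqref{eqannleft}. Hence $(\Omega_k - \Omega_{k-1}) R^E_p = 0$ for every $k$, and telescoping gives the desired equality of $\End(G)$-valued currents.

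The main obstacle is really just the sign bookkeeping forced by the superstructure \eqref{eqleibniz}. Fortunately, all the $\psi_j$ and $\varphi_j$ carry odd total degree, so the signs in the commutation rules used above cancel cleanly; and Lemma \ref{lmampplus1} is not needed here, since only the raw identity $R^F_p a_0 - a_p R^E_p = \psi_{p+1} M_{p+1}$ enters the argument.
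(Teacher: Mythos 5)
Your argument is correct, and its core ingredients are the same as the paper's: a morphism of complexes extending $\Id_G$, the comparison formula, and the commutation identities $D\psi_j\psi_{j+1}=\psi_j D\psi_{j+1}$ and $\varphi_j D\varphi_{j+1}=D\varphi_j\varphi_{j+1}$. Your telescoping identity for $\Omega_k$ is precisely the content (and the proof) of Lemma~\ref{lma:DpsiDphi}, which the paper invokes as a black box. The one genuine difference is structural: the paper first reduces to the Cohen--Macaulay locus via the dimension principle and assumes one of the resolutions has minimal length $p$, so that the clean comparison formula \eqref{eq:comparisoncm} applies with no error term; you instead keep the general formula \eqref{eq:comparisoncodimp} and annihilate $\psi_{p+1}M_{p+1}$ by sliding $\psi_{p+1}$ leftwards into $\eta\psi_1=0$ --- exactly the manipulation the paper itself performs in the proof of Lemma~\ref{lmaindep}. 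This buys a somewhat more direct proof that needs neither the Cohen--Macaulay reduction nor Lemma~\ref{lmampplus1}, at the small cost of having to note that $\psi_{p+1}M_{p+1}\tau^{-1}$ is well defined (or, more simply, of carrying a fixed lift $g_0$ of $g\in G$ through the computation).
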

Here $R^F_p \eta^{-1}$ and  $R^E_p \tau^{-1}$ are defined as in the
text preceding Theorem ~\ref{thmmain2}.  

\begin{remark} \label{rem:DP}
In case $\rank E_0 = \rank F_0 = 1$ these lemmas coincide.
When $G = \Ok_{X,\zeta}/\mathcal{I}$, where $\mathcal{I}$ is a complete intersection ideal of
codimension $p$, and $(E,\varphi)$ and $(F,\psi)$ are Koszul complexes
of two minimal sets of generators 
of $\mathcal{I}$, then \eqref{eqindep} and \eqref{eqindep2} follows
rather easily from the transformation law and duality principle 
for Coleff-Herrera products. 
This was a key observation which allowed for global versions
of the Poincar\'e-Lelong formula \eqref{eqpl} for locally complete intersections in \cite{DP}.
In order to prove Lemma~\ref{lmaindep} and Lemma~\ref{lmaindep2}, we use the comparison formula,
Theorem~\ref{thmRcomparison}, which is a generalization of the
transformation law.
\end{remark}

The proofs of both of these lemmas use the following lemma.

\begin{lma} \label{lma:DpsiDphi}
    Let $(E,\varphi)$ and $(F,\psi)$ be complexes of free $\Ok_{X,\zeta}$-modules, and let
    $b : (E,\varphi) \to (F,\psi)$ be a morphism of complexes. Let $D$ be the connection 
    on $\End(E\oplus F)$ induced by connections on $E_0,\dots,E_p$ and $F_0,\dots,F_p$.
    Then,
    \begin{equation} \label{eq:DpsiDphi}
        D\psi_1 \cdots D\psi_p b_p = b_0 D\varphi_1 \dots D\varphi_p 
        + \psi_1 \alpha + \beta \varphi_p
    \end{equation}
    for a smooth $\Hom(E_p,F_1)$-valued $(p,0)$-form $\alpha$ and a smooth
    $\Hom(E_{p-1},F_0)$-valued $(p,0)$-form $\beta$.
\end{lma}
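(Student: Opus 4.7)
The plan is to prove the lemma by downward induction on $k$, peeling one factor $D\psi_k$ at a time off the left of $D\psi_1\cdots D\psi_p b_p$ and converting it to a $D\varphi_k$ on the right of the sliding $b_{k-1}$, modulo terms of the form $\psi_1\cdot(\textrm{smooth})$ and $(\textrm{smooth})\cdot\varphi_p$. The whole argument is driven by applying $D$ to three trivial identities: the morphism-of-complexes relations $\psi_k b_k = b_{k-1}\varphi_k$, and the complex conditions $\psi_{k-1}\psi_k=0$ and $\varphi_k\varphi_{k+1}=0$.

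The central step identity comes from applying $D$ to $\psi_k b_k = b_{k-1}\varphi_k$ via the Leibniz rule \eqref{eqleibniz}, using $\deg \psi_k=1$ (odd) and $\deg b_k=0$ (even) in the superstructure:
\[
D\psi_k\, b_k \;=\; b_{k-1}\, D\varphi_k \;+\; Db_{k-1}\,\varphi_k \;+\; \psi_k\, Db_k.
\]
Similarly, applying $D$ to the complex conditions yields the commutation identities $D\psi_{k-1}\psi_k = \psi_{k-1}D\psi_k$ and $\varphi_k D\varphi_{k+1} = D\varphi_k\varphi_{k+1}$; iterating these gives
\[
D\psi_1\cdots D\psi_{k-1}\,\psi_k \;=\; \psi_1\, D\psi_2\cdots D\psi_k, \qquad \varphi_k\, D\varphi_{k+1}\cdots D\varphi_p \;=\; D\varphi_k\cdots D\varphi_{p-1}\,\varphi_p.
\]
With these in hand, the inductive claim to carry is
\[
D\psi_1\cdots D\psi_p\, b_p \;=\; D\psi_1\cdots D\psi_k\, b_k\, D\varphi_{k+1}\cdots D\varphi_p \;+\; \psi_1\alpha_k \;+\; \beta_k\,\varphi_p
\]
for smooth forms $\alpha_k,\beta_k$ of the appropriate bidegree and bundle type, trivial at $k=p$ with $\alpha_p=\beta_p=0$ and giving the lemma at $k=0$. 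In the inductive step, I substitute the step identity into the factor $D\psi_k b_k$ in the middle, producing three pieces: the $b_{k-1}D\varphi_k$ piece continues the induction, the $Db_{k-1}\varphi_k$ piece is turned into a $(\textrm{smooth})\cdot\varphi_p$ term by the $\varphi$-commutation identity, and the $\psi_k Db_k$ piece is pushed all the way to the left by the $\psi$-commutation identity to produce a $\psi_1\cdot(\textrm{smooth})$ term.

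The main obstacle is purely bookkeeping: keeping track of the Leibniz signs, which, thanks to $\psi_k$ and $\varphi_k$ being odd while the relations being differentiated vanish, collapse so that no signs intrude in the commutation moves, and verifying that the residual $\alpha$ and $\beta$ finally end up as smooth $(p,0)$-forms valued in $\Hom(E_p,F_1)$ and $\Hom(E_{p-1},F_0)$, respectively, as the statement requires. No analytical input beyond the graded Leibniz rule on $\End(E\oplus F)$ is needed.
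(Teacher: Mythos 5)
Your proposal is correct and follows essentially the same route as the paper: differentiate the chain-map relation $\psi_k b_k = b_{k-1}\varphi_k$ with the graded Leibniz rule to get the three-term step identity, and use the commutation identities $D\psi_\ell\psi_{\ell+1}=\psi_\ell D\psi_{\ell+1}$ and $\varphi_\ell D\varphi_{\ell+1}=D\varphi_\ell\varphi_{\ell+1}$ to push the error terms out to $\psi_1\alpha$ and $\beta\varphi_p$. The paper phrases the downward induction as "apply the one-step identity repeatedly for $k=p,\dots,1$," which is the same argument in slightly different packaging.
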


\begin{proof}
    We claim that for any $1 \leq k \leq p$,
    \begin{equation} \label{eqmoveb}
    \begin{gathered}
       D\psi_1 \cdots D\psi_k b_k D\varphi_{k+1} \cdots D\varphi_p = \\
       D\psi_1 \cdots D\psi_{k-1} b_{k-1} D\varphi_{k} \cdots D\varphi_p
        + \psi_1 \alpha_k + \beta_k \varphi_p
    \end{gathered}
    \end{equation}
    for a smooth $\Hom(E_p,F_1)$-valued $(p,0)$-form $\alpha_k$ and a smooth
    $\Hom(E_{p-1},F_0)$-valued $(p,0)$-form $\beta_k$.
    By using this repeatedly for $k=p,\dots,1$, we get \eqref{eq:DpsiDphi}.

   To prove the claim, we note first that since $b$ is a morphism of complexes, $\psi_k b_k = b_{k-1} \varphi_k$,
   and thus,
   \begin{equation} \label{eqdgpap}
   \begin{gathered}
       D\psi_k b_k = D(\psi_k b_k) + \psi_k Db_k = D(b_{k-1} \varphi_k) + \psi_k Db_k = \\
        Db_{k-1} \varphi_k + b_{k-1}D\varphi_k + \psi_k Db_k,
   \end{gathered}
   \end{equation}
   where the signs depend on that $b$ is an even mapping, while $\psi$ is odd.

   We now replace $D\psi_k b_k$ in the first line of \eqref{eqmoveb} by the expression in the
   second line of \eqref{eqdgpap}.
   Note first that the term coming from the term $b_{k-1} D\varphi_k$ in the second
   line of \eqref{eqdgpap} equals the first term of the second line of \eqref{eqmoveb}.

   We consider next the term
   \begin{equation} \label{eqdbphi}
       D\psi_1 \cdots D\psi_{k-1}Db_{k-1} \varphi_k D\varphi_{k+1} \cdots D\varphi_p,
   \end{equation}
   coming from the term $Db_{k-1}\varphi_k$ in the second line of \eqref{eqdgpap}.
   Since $\varphi_\ell \varphi_{\ell+1} = 0$, we get by the Leibniz rule \eqref{eqleibniz}
   and the fact that $\varphi_\ell$ has odd degree that
   $\varphi_\ell D\varphi_{\ell+1} = D\varphi_\ell \varphi_{\ell+1}$.
   Using this repeatedly for $\ell=k,\dots,p-1$, we get that \eqref{eqdbphi} equals
   \begin{equation*}
       D\psi_1 \cdots D\psi_{k-1}Db_{k-1} D\varphi_k D\varphi_{k+1} \cdots D\varphi_{p-1} \varphi_p
       =: \beta_k \varphi_p.
   \end{equation*}

   Finally, we consider the term coming from the term $\psi_k Db_k$ in
   the second line of \eqref{eqdgpap}. 
   By using that $\psi_\ell \psi_{\ell+1} = 0$ and the Leibniz rule, we get that
   $D\psi_\ell \psi_{\ell+1} = \psi_\ell D\psi_{\ell+1}$, and using
   this repeatedly we get that this term equals
   \begin{equation*}
       \psi_1 D\psi_2 \cdots D\psi_k Db_k D\varphi_{k+1}\cdots D\varphi_p =: \psi_1 \alpha_k.
   \end{equation*}

   To conclude, when replacing $D\psi_k b_k$ in the first line of \eqref{eqmoveb}
   by the last line of \eqref{eqdgpap}, we obtain three terms
   of the form as in the second line of \eqref{eqmoveb},
   and we have thus proved \eqref{eqmoveb}.
\end{proof}

\begin{proof}[Proof of Lemma~\ref{lmaindep}]
    Since $G$ has codimension $p$, it is Cohen-Macaulay outside of a subvariety of codimension $p+1$. Since
    both sides of \eqref{eqindep} are pseudomeromorphic $(p,p)$-currents, it is by the dimension
    principle, Proposition~\ref{proppmdim}, enough to prove \eqref{eqindep} where $G$ is Cohen-Macaulay.
    We will thus assume for the remainder of the proof that $G$ is Cohen-Macaulay.

    Let $(H,\eta)$ by any free resolution of $G$.
    Using \eqref{eq:trABBA} and \eqref{eqannright},
    we get that if $\xi : H_p \to H_1$ is any smooth morphism, then
    \begin{equation} \label{eqannR}
        \tr (\eta_1 \xi R^H_p) = \pm \tr (\xi R^H_p \eta_1) = 0.
    \end{equation}

    We let $a : (F,\psi) \to (E,\varphi)$ and $b : (E,\varphi) \to (F,\psi)$ be morphisms of complexes
    extending the identity morphism on $G$, see Section~\ref{ssectcomp}.
    Then, $b \circ a : (F,\psi) \to (F,\psi)$ extends the identity morphism on $G$. Since the
    identity morphism on $(F,\psi)$ trivially also extends the identity morphism on $G$, we
    get by Proposition~\ref{propcomplexcomparison} that there exists $s_0 : F_0 \to F_1$ such that
    \begin{equation} \label{eqhomotopy}
        \Id_{F_0} = b_0 a_0 + \psi_1 s_0.
    \end{equation}

    We let $W = \tr (D\psi_1 \cdots D\psi_p R^F_p)$.
    We then get by \eqref{eqannright} and \eqref{eqhomotopy} that
    \begin{equation} \label{eqwdefeq}
        W = \tr (D\psi_1 \cdots D\psi_p R^F_p) = \tr (D\psi_1 \cdots D\psi_p R^F_p b_0 a_0),
    \end{equation}
    and by \eqref{eq:trABBA},
    \begin{equation*}
        W = \tr (a_0 D\psi_1 \cdots D\psi_p R^F_p b_0).
    \end{equation*}

    By the comparison formula \eqref{eq:comparisoncodimp}, applied to $b : (E,\varphi) \to (F,\psi)$, and Lemma~\ref{lmampplus1},
    \begin{equation*}
        R^F_p b_0 = b_p R^E_p - \psi_{p+1} \sigma^F_{p+1} b_p R^E_p,
   \end{equation*}
   where $\sigma^F_{p+1}$ is smooth.
   Since $D\psi_1 \cdots D\psi_p \psi_{p+1} = \psi_1 D\psi_2 \cdots
   D\psi_{p+1}$, see the previous proof, we get that
   \begin{equation*}
       W = \tr (a_0 D\psi_1 \cdots D\psi_p b_p R^E_p) - \tr(a_0 \psi_1 \alpha' R^E_p),
   \end{equation*}
   where $\alpha'$ is smooth.
   Thus, by Lemma~\ref{lma:DpsiDphi},
   \begin{equation} \label{eqw3}
       W = \tr (a_0 b_0 D\varphi_1 \cdots D\varphi_p R^E_p) + \tr (a_0 \psi_1 (\alpha-\alpha') R^E_p) + \tr(a_0 \beta \varphi_p R^E_p ).
   \end{equation}
   The last term in the right-hand side of \eqref{eqw3} vanishes by \eqref{eqannleft}.
   In addition, since $a$ is a morphism of complexes,
   $a_0 \psi_1 = \varphi_1 a_1$, so the middle term in the right-hand side of \eqref{eqw3} vanishes
   by \eqref{eqannR}.
   Thus, only the first term in the right-hand side of \eqref{eqw3} remains, i.e.,
   \begin{equation*}
       W = \tr (a_0 b_0 D\varphi_1 \cdots D\varphi_p R^E_p).
   \end{equation*}
   From \eqref{eq:trABBA} and \eqref{eqwdefeq} (with the roles of $(E,\varphi)$ and $(F,\psi)$ reversed),
   we finally conclude that
   \begin{equation*}
       W = \tr (D\varphi_1 \cdots D\varphi_p R^E_p).
   \end{equation*}
\end{proof}

\begin{proof} [Proof of Lemma~\ref{lmaindep2}]
Since the currents in \eqref{eqindep2} are pseudomeromorphic
$(p,p)$-currents, we may as in the previous proof assume that $G$ is
Cohen-Macaulay. 
    In addition, it is enough to prove \eqref{eqindep2} under the assumption that one of the
    free resolutions, say, $(F,\psi)$, has minimal length, $p$.
    We let $a : (F,\psi) \to (E,\varphi)$ and $b : (E,\varphi) \to (F,\psi)$ be morphisms
    of complexes extending the identity morphism on $G$.

    We claim that
    \begin{equation} \label{eq:Retainvers}
        R^F_p \eta^{-1} = b_p R^E_p \tau^{-1}.
    \end{equation}
    To see this, let $g \in \Ok_{Z,\zeta}$, and let $g_0$ be such that $\tau g_0 = g$. Then, by definition,
    \begin{equation} \label{eq:Rtauinvers}
        b_p R^E_p \tau^{-1} g = b_p R^E_p g_0,
    \end{equation}
cf.\ the text right before Theorem ~\ref{thmmain2}. 
    By \eqref{eq:comparisoncm}, the right-hand side of \eqref{eq:Rtauinvers}
    equals $R^F_p b_0 g_0$. Since $b$ extends the identity morphism, $\eta b_0 g_0 = \tau g_0 = g$.
    Thus, $R^F_p b_0 g_0$ equals by definition $R^F_p \eta^{-1} g$, which proves the claim.

    By \eqref{eq:Retainvers},
    \begin{equation} \label{eq:w5}
        \eta D\psi_1 \cdots D\psi_p R^F_p \eta^{-1} = \eta D\psi_1 \cdots D\psi_p b_p R^E_p \tau^{-1}.
    \end{equation}
    By Lemma~\ref{lma:DpsiDphi}, the right-hand side of \eqref{eq:w5} equals
    \begin{equation} \label{eq:w6}
        \eta b_0 D\varphi_1 \cdots D\varphi_p R^E_p \tau^{-1}
        + \eta  \psi_1 \alpha R^E_p \tau^{-1} + \eta\beta \varphi_p R^E_p \tau^{-1}.
    \end{equation}
    Since $\eta \psi_1 = 0$, the second term in the right-hand side of \eqref{eq:w6} vanishes,
    and the last term also vanishes by \eqref{eqannleft}. To conclude, using that $\eta b_0 = \tau$,
    we thus get \eqref{eqindep2}.
\end{proof}

By Lemma~\ref{lmaindep} and Lemma~\ref{lmaindep2},
\begin{equation*}
    \tr (D\varphi_1 \cdots D\varphi_p R^E_p) \text{ and } \tau D\varphi_1 \cdots D\varphi_p R^E_p \tau^{-1}
\end{equation*}
only depend on $G$ and not on the choice of free resolution
$(E,\varphi)$ of $G$ and connection $D$.
When $\rank E_0 = 1$, these currents coincide. If $G = \Ok_Z$, then there always exists a free resolution
$(F,\psi)$ of $\Ok_Z$ with $\rank F_0 = 1$, and thus, we get that for any free resolution $(E,\varphi)$ of $\Ok_Z$,
\begin{equation} \label{eqtraceidentification}
    \tr(D\varphi_1 \cdots D\varphi_p R^E_p) = \tau D\varphi_1 \cdots D\varphi_p R^E_p \tau^{-1}.
\end{equation}

\begin{proof}[Proof of Theorem~\ref{thmmain2}]
Note that by \eqref{eqtraceidentification}, it is enough to prove \eqref{eqmain2}.

    Let $W = Z_{\rm red}$. We
    first consider a point $\xi \in W_{\rm reg}$, and apply Lemma~\ref{lmagoodcoord2}.
    We fix a neighbourhood $V \subset X$ of $\xi$ contained in the coordinate chart from Lemma~\ref{lmagoodcoord2}
    such that $W = \{ z_1 = \dots = z_p = 0 \}$ on $V$, and $V \cap W = U$.
    We first prove that \eqref{eqmain2} holds on $V$. 
Note that on $V$, $[Z] = m[z_1 = \dots = z_p = 0 ]$,
    so we thus want to prove that
    \begin{equation} \label{eqlocalnicemain}
        \frac{1}{(2\pi i)^p p!}\tr ( D\varphi_1 \cdots D\varphi_p R_p^E ) = m[z_1 = \dots = z_p = 0 ].
    \end{equation}

    \begin{lma} \label{lma:mainKoszul}
        Let $\zeta \in U \setminus Y$, and let $(K,\phi)$ be the universal free resolution of $\Ok_{Z,\zeta}$ from 
        Theorem~\ref{thmkoszul}. Then
    \begin{equation} \label{eqlocalnicemainK}
        \frac{1}{(2\pi i)^p p!}\tr ( D\phi_1 \cdots D\phi_p R_p^K ) =
        m[z_1 = \dots = z_p = 0 ] 
    \end{equation}
        in a neighbourhood of $\zeta$.
    \end{lma}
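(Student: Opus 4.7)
The plan is to exploit the explicit triangular structure of the universal Koszul resolution $(K,\phi)$, guaranteed by Remark~\ref{rem:upper-triangular}, to compute both factors of the trace and reduce \eqref{eqlocalnicemainK} to $m$ copies of the classical Poincar\'e--Lelong formula. First equip each $K_k$ with the trivial Hermitian metric and the trivial connection for which the basis $\{[z^{\alpha^j}] e_I\}$ is orthonormal; by Lemma~\ref{lmaindep} the trace on the left of \eqref{eqlocalnicemainK} does not depend on these choices.

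Next, applying Example~\ref{exDkoszul} to the Koszul complex of the regular sequence $(\mathbf{z}_1,\ldots,\mathbf{z}_p) \subset A$, regarded as an $\Ok_X$-linear complex, one obtains
\[
D\phi_1 \cdots D\phi_p = p! \, d\mathbf{z}_1 \wedge \cdots \wedge d\mathbf{z}_p \wedge e_\emptyset \wedge e_{\{1,\ldots,p\}}^*,
\]
where each $d\mathbf{z}_i$ is an $\End A$-valued $(1,0)$-form whose matrix in the basis $\{[z^{\alpha^j}]\}$ equals $dz_i\, I_m - dL_i$, with $L_i$ the matrix of multiplication by $[z_i]$ on $A$. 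By Remark~\ref{rem:upper-triangular}, $L_i$ is strictly upper triangular, so the matrix product $d\mathbf{z}_1 \cdots d\mathbf{z}_p$ is upper triangular with every diagonal entry equal to the scalar $p$-form $dz_1 \wedge \cdots \wedge dz_p$.

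The analogous structural claim for $R^K_p$ is based on the observation that the increasing filtration $K_k^{\leq j} := \bigoplus_{j' \leq j} \Ok_X \cdot [z^{\alpha^{j'}}] e_I$ is preserved by each $\phi_k$, with every successive quotient $K^{\leq j}/K^{\leq j-1}$ isomorphic to the classical Koszul complex $(L,\psi) = (\bigwedge^\bullet \Ok_X^{\oplus p}, \delta_z)$ of $(z_1,\ldots,z_p)$ over $\Ok_X$, whose top residue current is the Coleff--Herrera product $\dbar(1/z_p) \wedge \cdots \wedge \dbar(1/z_1) \wedge e_{\{1,\ldots,p\}} \wedge e_\emptyset^*$. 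Using this filtration --- either by a direct inversion of $\phi_k$ as in the case $p=1, m=2$, where $\sigma^K_1 = \phi_1^{-1}$ is simply the formal matrix inverse, or by an inductive argument based on the comparison formula, Theorem~\ref{thmRcomparison}, applied to the inclusion $K^{\leq j-1} \hookrightarrow K^{\leq j}$ --- one shows that the matrix of $R^K_p$ in the basis $\{[z^{\alpha^j}]\}$ is also upper triangular, with every diagonal entry equal to $\dbar(1/z_p) \wedge \cdots \wedge \dbar(1/z_1) \wedge e_{\{1,\ldots,p\}} \wedge e_\emptyset^*$.

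Combining the two, the product matrix $D\phi_1 \cdots D\phi_p R^K_p$ is upper triangular, and its trace sums $m$ identical diagonal entries, each equal (up to the sign conventions of Section~\ref{matrisnot}) to $p! \, dz_1 \wedge \cdots \wedge dz_p \wedge \dbar(1/z_p) \wedge \cdots \wedge \dbar(1/z_1)$. By the classical Poincar\'e--Lelong formula \eqref{eqpl}, each summand equals $(2\pi i)^p p! \, [z_1 = \cdots = z_p = 0]$, yielding \eqref{eqlocalnicemainK} after dividing through by $(2\pi i)^p p!$. The main obstacle is the diagonal identification of $R^K_p$: although the filtration is preserved by $\phi_k$, the Hermitian adjoint $\phi_k^*$ reverses it, so the minimal right inverses $\sigma^K_k$ do not a priori preserve the filtration. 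One must therefore argue, either by direct computation of the $\sigma^K_k$ or via the comparison formula, that the resulting off-filtration contributions do not disturb the diagonal after taking $\dbar$ and extracting the top residue of bidegree $(0,p)$.
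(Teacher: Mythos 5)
Your overall skeleton is sound, and the structural claims you make are in fact true: the filtration $K^{\leq j}$ is preserved by $\phi_k$ with graded quotients the Koszul complex of $(z_1,\dots,z_p)$, the matrix of $D\phi_1\cdots D\phi_p$ is triangular with diagonal $p!\,dz_1\wedge\cdots\wedge dz_p$, and the matrix of $R^K_p$ is triangular in the same ordering with diagonal entries $\dbar(1/z_p)\wedge\cdots\wedge\dbar(1/z_1)$, so the trace does reduce to $m$ copies of the Poincar\'e--Lelong formula. But the step you yourself flag as ``the main obstacle'' --- the triangularity of $R^K_p$ and the identification of its diagonal --- is precisely the content of the lemma, and it is left unproven. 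Neither of your two suggested routes closes it as stated. Direct computation of the $\sigma^K_k$ fails for $p>1$ because the minimal right inverses depend on the Hermitian metric, which does not respect the filtration; the $\sigma^K_k$ are genuinely not triangular, and the dimension principle does not kill the resulting off-filtration terms (they are $(0,p)$-currents supported on $W$ of codimension $p$, exactly the critical bidegree). The route via the comparison formula applied to the inclusions $K^{\leq j-1}\hookrightarrow K^{\leq j}$ is also not immediate: Theorem~\ref{thmRcomparison} requires morphisms of free resolutions extending a map of modules, and one would first have to identify $\coker\phi_1^{\leq j}$ and check that the inclusion extends an appropriate module map --- none of which is done.

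The paper closes this gap differently, and the difference is worth noting: instead of comparing $(K,\phi)$ with its filtration pieces, it compares with the Koszul complex $(L,\psi)$ of the monomial complete intersection $(z_1^{\beta_1},\dots,z_p^{\beta_p})\subset\mathcal J$, via an \emph{explicitly constructed} morphism $c:(L,\psi)\to(K,\phi)$ with $c_1:\epsilon_i\mapsto\sum_\gamma z_i^{\beta_i-\gamma-1}[z_i^\gamma]e_i$. Composing $c$ with multiplication by $[z^{\alpha^i}]$ and applying \eqref{eq:comparisoncm} gives $R^K_p[z^{\alpha^i}]e_\emptyset=[z^{\alpha^i}]c_pR^L_p$, where $R^L_p$ is known explicitly by \eqref{eq:bmch}; the triangularity of multiplication by monomials (Remark~\ref{rem:upper-triangular}) then shows that only the diagonal terms survive the trace, and $z^{\beta-\mathbf 1}\dbar(1/z_p^{\beta_p})\wedge\cdots\wedge\dbar(1/z_1^{\beta_1})$ is exactly your Coleff--Herrera diagonal entry. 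So your proposed structure theorem for $R^K_p$ is correct, but to make your argument a proof you must supply this (or an equivalent) comparison argument; as written the proposal assumes the hardest part.
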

    Taking this lemma for granted, using Lemma~\ref{lmaindep} and Theorem~\ref{thmkoszul},
    we get first that \eqref{eqlocalnicemain} holds in a neighbourhood of each $\zeta \in U\setminus Y$.
    Thus, \eqref{eqlocalnicemain} holds in a neighbourhood of $U \setminus Y$, and since
    both sides of \eqref{eqlocalnicemain} have their support on $V \cap W = U$,
    \eqref{eqlocalnicemain} holds in fact on $V \setminus Y$.
    Since $Y$ is a hypersurface of $W$, and $W$ has codimension $p$ in $V$, $Y$ has codimension $p+1$ in $V$.
    As both sides of \eqref{eqlocalnicemain} are pseudomeromorphic $(p,p)$-currents on $V$ which coincide outside
    of $Y$, \eqref{eqlocalnicemain} holds on all of $V$ by the dimension principle, Proposition~\ref{proppmdim}.

    We have thus proven that any point $\xi \in W_{\rm reg}$ has a neighbourhood
    such that \eqref{eqmain2} holds, and since both sides of \eqref{eqmain2} have support on $W$,
    \eqref{eqmain2} holds on $X \setminus W_{\rm sing}$.
    Both sides of \eqref{eqmain2} are pseudomeromorphic $(p,p)$-currents on $X$, and
    $W_{\rm sing}$ has codimension $\geq p+1$ in $X$, so we get by the dimension principle
    that \eqref{eqmain2} holds on all of $X$.
\end{proof}

\begin{proof} [Proof of Lemma~\ref{lma:mainKoszul}] 
    We here use the notation from Section~\ref{sectkoszul}, and we let $e_1,\dots,e_p$ be the standard basis for
    $A^{\oplus p}$ over $A$. Note that over $\Ok_{X,\zeta}$, $\bigwedge^k A^{\oplus p}$ has the basis
    $\cl{z^{\alpha^i}} e_I$, where $i=1,\dots,m$ and $I \subset \{ 1,\dots,p\}$, $|I| = k$.
    Since by Lemma~\ref{lmaindep}, the left-hand side of \eqref{eqlocalnicemainK} is independent of the choice of connection,
    we may assume that $D$ is trivial with respect to these bases.

    In order to prove \eqref{eqlocalnicemainK}, we first write out the left-hand side as
    \begin{equation} \label{eqtraceexpression}
        \tr (D\phi_1 \cdots D\phi_p R^K_p) = \sum_{i=1}^m (\cl{z^{\alpha^i}}e_\emptyset)^* D\phi_1\cdots D\phi_p R^K_p \cl{z^{\alpha^i}} e_\emptyset,
    \end{equation}
    where
    $(\cl{z^{\alpha^1}}e_\emptyset)^*,\dots,(\cl{z^{\alpha^m}}e_\emptyset)^*$
    is the dual basis of 
    the basis $\cl{z^{\alpha^1}} e_\emptyset,\dots,\cl{z^{\alpha^m}} e_\emptyset$ of $K_0$.
    
    We will use the comparison formula, Theorem ~\ref{thmRcomparison}, to compute the currents $R^K_p \cl{z^{\alpha^i}} e_\emptyset$ appearing in the sum 
    in the right-hand side of \eqref{eqtraceexpression}.
    First of all, by the Nullstellensatz, there exist $\beta_i$ such that $z_i^{\beta_i} \in \mathcal{J}$ for $i = 1,\dots,p$.
    Throughout this proof, we will let $\beta_1,\dots,\beta_p$ denote such a choice.
    We let $\epsilon_1,\dots,\epsilon_p$ be the standard basis of $\Ok_{X,\zeta}^{\oplus p}$ over $\Ok_{X,\zeta}$.
    We let $(L,\psi)$ be the Koszul complex over $\Ok_{X,\zeta}$ of the tuple
    $(z_1^{\beta_1},\dots,z_p^{\beta_p})$, and we let $\mathcal{I}$ be the ideal generated by this tuple.

    Since $\mathcal{I}$ is contained in $\mathcal{J}$, there exists a morphism
    $c : (L,\psi) \to (K,\phi)$ extending the natural surjection
    $\Ok_{X,\zeta}/\mathcal{I} \to \Ok_{Z,\zeta}$, see Proposition
    ~\ref{propcomplexcomparison}. 
    We construct explicitly such a morphism $c$.
    We let $c_k$ be the map $L_k = \bigwedge^k \Ok_{X,\zeta}^{\oplus p} \to \bigwedge^k A^{\oplus p} = K_k$
    induced by the map $c_1 : \Ok_{X,\zeta}^{\oplus p} \to A^{\oplus p}$,
    \begin{equation*}
        c_1 : \epsilon_i \mapsto \sum_{\gamma_i=0}^{\beta_i-1} z_i^{\beta_i-\gamma_i-1}\cl{z_i^{\gamma_i}} e_i,
    \end{equation*}
    i.e., $c_k$ is defined by
    \begin{equation*}
        c_k : \epsilon_{i_1} \wedge \cdots \wedge \epsilon_{i_k} \mapsto c_1(\epsilon_{i_1})\wedge\cdots\wedge c_1(\epsilon_{i_k}).
    \end{equation*}
    Here, $c_0 : L_0  \to K_0$ is to be interpreted as $\epsilon_\emptyset \mapsto \cl{1} e_\emptyset$.
    It is straightforward to check that $c$ is a morphism of complexes extending the natural surjection
    $\Ok_{X,\zeta}/\mathcal{I} \to \Ok_{Z,\zeta}$ by using the formula
    \begin{equation*}
        (z_j - \cl{z_j})\left(\sum_{\gamma_j = 0}^{\beta_j-1} z_j^{\beta_j-\gamma_j-1} \cl{z_j^{\gamma_j}} \right)
        = z_j^{\beta_j}\cl{1} - \cl{z_j^{\beta_j}} = z^{\beta_j}_j\cl{1},
    \end{equation*}
    where the last equality comes from that $z_j^{\beta_j} = 0$ in $\Ok_{Z,\zeta}$.

    We now fix some $i \in \{ 1,\dots,m\}$, and let $\tilde{c} := (\cl{z^{\alpha^i}} c) : (L,\psi) \to (K,\phi)$
    (i.e., $\tilde c$ equals $c$ composed with multiplication with $\cl{{z^{\alpha^i}}}$).
    This is clearly a morphism of complexes, with $\tilde{c}_0(\epsilon_\emptyset) = \cl{z^{\alpha^i}} e_\emptyset$.
    Thus, using the comparison formula, \eqref{eq:comparisoncm}, for $\tilde{c}$,
    \begin{equation*}
        R^K_p \cl{z^{\alpha^i}} e_\emptyset \epsilon_\emptyset^* = \cl{z^{\alpha^i}} c_p R^L_p.
    \end{equation*}
    Applying this to each term in the sum in \eqref{eqtraceexpression}, we get that
    \begin{multline*}
        \tr (D\phi_1 \cdots D\phi_p R^K_p) = \sum e_\emptyset^*\cl{z^{\alpha^i}}^* D\phi_1\cdots D\phi_p R^K_p \cl{z^{\alpha^i}}
        e_\emptyset \epsilon_\emptyset^*  \epsilon_\emptyset = \\ 
        =  \sum e_\emptyset^*\cl{z^{\alpha^i}}^* D\phi_1\cdots D\phi_p \cl{z^{\alpha^i}} c_p R^L_p \epsilon_\emptyset.
    \end{multline*}
    We write the map $c_p$ as
    \begin{equation*}
        c_p : \epsilon_{\{1,\dots,p\}} \mapsto \tilde{B} \wedge e_{\{1,\dots,p\}},
    \end{equation*}
    where
    \begin{equation*}
        \tilde{B} = \sum_{\gamma \leq \beta-{\bf 1}} z^{\beta-\gamma-{\bf 1}} \cl{z^{\gamma}}.
    \end{equation*}
    Since $\cl{z^{\alpha^i}}$ and $\tilde{B}$ commute, being elements of $A$, we get that
    \begin{align*}
        \tr (D\phi_1 \cdots D\phi_p R^K_p) = \sum e_\emptyset^*\cl{z^{\alpha^i}}^* D\phi_1\cdots D\phi_p \tilde{B} \cl{z^{\alpha^i}}
        e_{\{1,\dots,p\}} \epsilon_{\{1,\dots,p\}}^* R^L_p \epsilon_\emptyset.
    \end{align*}
    We let $B$ be the form-valued $\Ok_{X,\zeta}$-linear map $A \to A$ given by
    \begin{equation*}
        B := e_\emptyset^* D\phi_1 \cdots D\phi_p \tilde{B} e_{\{1,\dots,p\}}.
    \end{equation*}
    Using that $e_\emptyset^*$ and $\cl{z^{\alpha^i}}^*$ commute, and that $e_{\{1,\dots,p\}}$ and $\cl{z^{\alpha^i}}$ commute,
    we then get that 
    \begin{equation*}
        \tr (D\phi_1 \cdots D\phi_p R^K_p) = 
        \sum \cl{z^{\alpha^i}}^* B \cl{z^{\alpha^i}} \epsilon_{\{1,\dots,p\}}^* R^L_p \epsilon_\emptyset = 
        (\tr B) \epsilon_{\{1,\dots,p\}}^* R^L_p \epsilon_\emptyset.
    \end{equation*}
    Note that by \eqref{eq:bmch} and \eqref{circa},
    \begin{equation*}
        \epsilon_{\{1,\dots,p\}}^* R^L_p \epsilon_\emptyset = (-1)^{p^2}\dbar \frac{1}{z_p^{\beta_p}} \wedge \cdots \wedge \dbar \frac{1}{z_1^{\beta_1}}.
    \end{equation*}
Moreover, in view of the Poincar\'e-Lelong formula
\eqref{eqpl}, note that 
\begin{equation*} 
    (-1)^{p^2} \frac{1}{(2\pi i)^p} z^{\beta-{\bf 1}}  dz_1 \wedge \dots \wedge
    dz_p \wedge \dbar \frac{1}{z_p^{\beta_p}} \wedge \dots \wedge \dbar \frac{1}{z_1^{\beta_1}} 
    = [z_1 = \dots = z_p = 0]. 
\end{equation*}
  Thus, from Lemma~\ref{lmadphiapart} below, we conclude that
    \eqref{eqlocalnicemainK} holds.
\end{proof}

\begin{lma} \label{lmadphiapart}
    Let $B$ be as in the proof of Lemma~\ref{lma:mainKoszul}.
    Then
    \begin{equation} \label{eqtrM}
        \tr B = p! m z^{\beta-{\bf 1}} dz_1 \wedge \cdots \wedge dz_p.
    \end{equation}
\end{lma}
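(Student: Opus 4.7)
The plan is to express $B$ as an explicit matrix of $p$-forms in the basis $\cl{z^{\alpha^1}},\dots,\cl{z^{\alpha^m}}$ of $A$, and then to exploit the upper triangular structure from Remark~\ref{rem:upper-triangular} to compute the trace. Since $D$ is trivial in the chosen basis, as in Example~\ref{exDkoszul}, $D\phi_k$ equals contraction with $\sum_j d\mathbf z_j \wedge e_j^*$, where $d\mathbf z_j = D\mathbf z_j$ is now matrix-valued (the exterior derivative of the matrix of multiplication by $\mathbf z_j$ on $A$). Iterating the product and using the super-commutation rules of Section~\ref{matrisnot} to move each $e_j^*$ past the matrix-valued $1$-forms $d\mathbf z_k$, I would obtain
\begin{equation*}
    D\phi_1 \cdots D\phi_p = A \cdot e_\emptyset \cdot e_{\{1,\dots,p\}}^*, \qquad A := \sum_{\sigma \in S_p} \mathrm{sgn}(\sigma)\, d\mathbf z_{\sigma(1)} \wedge \cdots \wedge d\mathbf z_{\sigma(p)},
\end{equation*}
where $A$ is an antisymmetrized matrix-valued $p$-form; unlike in Example~\ref{exDkoszul}, the non-commuting matrix factors prevent the sum from collapsing further. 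Substituting into the definition of $B$ and using that $e_{\{1,\dots,p\}}^*$ extracts the $A$-coefficient while $e_\emptyset$ is trivial on $K_0$, one concludes $B = A\tilde B$ as an endomorphism of $A$ with $p$-form entries.

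Next I would invoke the upper triangular structure. Decompose $d\mathbf z_i = dz_i \cdot I - dM_{\cl{z_i}}$, where by Remark~\ref{rem:upper-triangular} the matrix $M_{\cl{z_i}}$, and hence $dM_{\cl{z_i}}$, is strictly upper triangular. In the expansion of $d\mathbf z_{\sigma(1)} \wedge \cdots \wedge d\mathbf z_{\sigma(p)}$, every term containing at least one $dM_{\cl{z_j}}$ factor is strictly upper triangular (a product of strictly upper triangular and diagonal matrices), while the ``all-$dz$'' term equals $dz_{\sigma(1)} \wedge \cdots \wedge dz_{\sigma(p)} \cdot I = \mathrm{sgn}(\sigma)\, dz_1 \wedge \cdots \wedge dz_p \cdot I$. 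Summing over $\sigma$ with the sign gives $A = p!\, dz_1 \wedge \cdots \wedge dz_p \cdot I + U$ with $U$ strictly upper triangular. Likewise, $\tilde B = z^{\beta - {\bf 1}} I + N$ with $N$ strictly upper triangular, by Remark~\ref{rem:upper-triangular} applied to each $\cl{z^\gamma}$ with $\gamma \neq 0$.

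Finally, $\tr B = \tr(A \tilde B) = p!\, dz_1 \wedge \cdots \wedge dz_p \cdot \tr \tilde B + \tr(U \tilde B)$. The product $U \tilde B$ is strictly upper triangular (strictly upper times upper triangular), hence $\tr(U \tilde B) = 0$; and $\tr \tilde B = m z^{\beta - {\bf 1}}$ since $\tr I = m$ while $N$ has zero diagonal. This yields \eqref{eqtrM}. The main obstacle is the first step: deriving the explicit antisymmetrized expression for $D\phi_1 \cdots D\phi_p$ in the matrix setting, which requires careful application of the super-commutation conventions to extend Example~\ref{exDkoszul} from scalars to matrices. A cleaner alternative is to observe that for computing the trace, only the diagonal contributions of each product $d\mathbf z_{i_1} \wedge \cdots \wedge d\mathbf z_{i_p}$ matter, and these diagonals are just scalar wedges of $dz$'s times $I$, so one can bypass any explicit formula for $(D\phi)^p$ altogether and directly sum contributions from permutations using $\sum_\sigma \mathrm{sgn}(\sigma)^2 = p!$.
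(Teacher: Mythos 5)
Your proposal is correct and follows essentially the same route as the paper: expand $D\phi_1\cdots D\phi_p$ as in Example~\ref{exDkoszul}, write $D\mathbf{z}_i = dz_i\,\Id - D\cl{z_i}$, and use the upper-triangularity from Remark~\ref{rem:upper-triangular} to kill every trace contribution except the all-$dz_i$, $\gamma=0$ terms, leaving $p!\,z^{\beta-{\bf 1}}dz_1\wedge\cdots\wedge dz_p\tr\cl{1}=p!\,m\,z^{\beta-{\bf 1}}dz_1\wedge\cdots\wedge dz_p$. Your concern about whether the non-commuting matrix factors collapse to $p!\,D\mathbf{z}_1\cdots D\mathbf{z}_p$ or only to the antisymmetrized sum is legitimate (the paper simply asserts the former), but, as you observe, it is immaterial here since only the scalar diagonal parts contribute to the trace.
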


\begin{proof}
    As $\phi_k$ is contraction with $\mathbf{z}_1 e_1 + \dots + \mathbf{z}_p e_p$, and $D$ is assumed to be trivial
    with respect to the bases $\cl{z^{\alpha^i}} e_I$, we get in the same way as in Example~\ref{exDkoszul} that
    \begin{equation*}
        e_\emptyset^* D\phi_1 \cdots D\phi_p e_{\{1,\dots,p\}} =
        p! D\mathbf{z}_1 \cdots D\mathbf{z}_p.
    \end{equation*}
    Since $\mathbf{z}_i = z_i - \cl{z_i}$, we thus get that
    $B$ is a sum of terms of the form
    \begin{equation} \label{eqMterm}
        \pm p! dz_{I} \wedge (D\cl{z_{J_1}})\cdots (D\cl{z_{J_\ell}}) z^{\beta-\gamma-{\bf 1}} \cl{z^\gamma},
    \end{equation}
    where $|I|+|J| = p$, and $I \cup J = \{ 1,\dots,p\}$.
   
 We claim that the traces of all such terms are zero, unless $|J| = 0$ and $\gamma = 0$.
    Recall from Remark~\ref{rem:upper-triangular} that, in the basis of $A$ given by $\cl{z^{\alpha^1}},\dots,\cl{z^{\alpha^m}}$,
    the matrix for multiplication with any monomial $\cl{z^{\delta}}$ is upper triangular, and in addition,
    it will have zeros on the diagonal if and only if $\delta \neq 0$.
    Thus, the matrix of each $D\cl{z_{J_i}}$ is a (form-valued) upper triangular matrix with zeros on the diagonal,
    since $D$ is assumed to be trivial with respect to the bases $\cl{z^{\alpha^i}}e_I$.
    Since $\cl{z^{\gamma}}$ is also upper-triangular, the full product \eqref{eqMterm} is upper-triangular,
    and with zeros on the diagonal if $|J| > 0$ or $\gamma \neq 0$. Thus, the trace is zero in case $|J| > 0$ or $\gamma \neq 0$,
    which proves the claim. 

To conclude,
    \begin{equation*}
        \tr B = p! dz_1 \wedge \dots \wedge dz_p z^{\beta-\mathbf{1}} \tr \cl{1},
    \end{equation*}
    and since $\tr \cl{1} = \rank_{\Ok_{X,\zeta}} A = m$, we obtain \eqref{eqtrM}.
\end{proof}

\begin{proof}[Proof of Theorem~\ref{thmnonpure}]
    We let $[Z]_{[k]}$ be the part of the fundamental cycle $[Z]$ of codimension $k$,
    i.e., $[Z]_{[k]} = \sum m_i [Z_i]$, where the sum is over the irreducible components $Z_i$
    of $Z_{\rm red}$ of codimension $k$, and $m_i$ is the geometric multiplicity of $Z_i$ in $Z$.
    Thus,
    \begin{equation*}
        [Z] = \sum_k [Z]_{[k]},
    \end{equation*}
    and it is enough to prove that
    \begin{equation} \label{eq:codimk}
       \frac{1}{(2\pi i)^k k!} D\varphi_1 \cdots D\varphi_k R_{[k]} = [Z]_{[k]},
    \end{equation}
    for $k=\codim Z,\dots,N$.
    Let $V_k = W_k \cap (\cup_{q \neq k} W_q)$; then $V_k$ has codimension $\geq k+1$.
    Note that both sides of \eqref{eq:codimk} have support on $W_k$, and that
    $Z$ has pure codimension $k$ on $W_k \setminus V_k$. Thus, \eqref{eq:codimk} holds
    on $X\setminus V_k$ by Theorem~\ref{thmmain}. Since $\codim V_k \geq k+1$ and both
    sides of \eqref{eq:codimk} are pseudomeromorphic $(k,k)$-currents, \eqref{eq:codimk} holds everywhere
    by the dimension principle, Proposition~\ref{proppmdim}.
\end{proof}

\begin{remark}\label{duo}
By analogous arguments we can prove \eqref{sprak}. 
First 
\begin{equation*}
    \frac{1}{(2\pi i)^kk!} \tr(D\varphi_1 \cdots D\varphi_k
    R_{[k]}) = 
\frac{1}{(2\pi i)^kk!} \tau D\varphi_1 \cdots D\varphi_k
    R_{[k]} \tau^{-1}=
[Z]_{[k]},
\end{equation*}
holds on $X\setminus V_k$ by Theorem ~\ref{thmmain2} and thus it holds
everywhere by the dimension principle. 
\end{remark}

\section{Examples of higher degree currents}\label{intressant}

We will start by illustrating 
Theorem ~\ref{thmnonpure} by explicitly computing the left-hand
side of \eqref{jesse} in the situation of Example ~\ref{condo}.

\begin{ex}
Let $Z$ be as in Example ~\ref{condo}. Then $\Ok_Z$ has a (minimal) free resolution
    \begin{equation*}
        0 \to \Ok_{\mathbb C^3} \stackrel{\varphi_2}{\to} \Ok_{\mathbb
          C^3}^{\oplus 2} \stackrel{\varphi_1}{\to} \Ok_{\mathbb C^3},
    \end{equation*}
    where
\begin{equation*}
    \{\varphi_2\} = \left[ \begin{array}{c} -y \\ x \end{array} \right] \text { and }
       \{ \varphi_1 \}= \left[ \begin{array}{cc} xz & yz \end{array}
        \right]. 
\end{equation*} 
Let $D$ be (induced by) the trivial connections on
$E_0=\Ok_{\C^3}$, $E_1=\Ok_{\C^3}^{\oplus 2}$, and $E_2=\Ok_{\C^3}$. 
In \cite{L15}*{Example~5}, the current $R^E=R^E_1+R^E_2$ was computed explicitly: 
\begin{eqnarray*}
&& \{R^E_1\} = \frac{1}{|x|^2+|y|^2}\left[\begin{array}{c} \bar{x}\\\bar{y} \end{array}\right]\dbar\frac{1}{z}\\
&& \{R^E_2\}  = \frac{1}{z} \dbar\frac{1}{y}\wedge \dbar\frac{1}{x} +  
            \dbar\left(\frac{\left[\begin{array}{cc} -\bar{y} & \bar{x} \end{array}\right]}{|x|^2+|y|^2}\right)
                \frac{1}{|x|^2+|y|^2}\left[\begin{array}{c}
                    \bar{x}\\\bar{y} \end{array}\right]\wedge\dbar\frac{1}{z}=:
\mu_1 + \mu_2.
\end{eqnarray*}

Note that the irreducible components $Z_1:=\{z=0\}$ and
$Z_2:=\{x=y=0\}$ of $Z$ are of codimension $1$ and $2$, respectively; thus 
$R^E_{[k]}=\mathbf 1_{Z_k} R_k^E$ for $k=1,2$. 
Since $R^E_1$ has support on $Z_1$ it follows that
$R^E_{[1]}=R^E_{1}$. 
Since $\supp \mu_2 \subseteq \{ z = 0 \}$, ${\mathbf 1}_{Z_2} \mu_2$ has support
on $Z_2 \cap \{ z = 0 \}=\{x=y=z=0\}$, which has codimension $3$, and thus it vanishes by the 
dimension principle. Since $\supp \mu_1 \subseteq \{ x = y = 0 \} = Z_2$,
we get that ${\mathbf 1}_{Z_2} \mu_1 = \mu_1$. Thus, to conclude,
\begin{equation*}
    \{ R^E_{[1]} \} = \frac{1}{|x|^2+|y|^2}\left[\begin{array}{c} \bar{x}\\\bar{y} \end{array}\right]\dbar\frac{1}{z} \text{ and }
    \{ R^E_{[2]} \} = \frac{1}{z} \dbar\frac{1}{y}\wedge \dbar\frac{1}{x}. 
\end{equation*}
By a straightforward calculation, one can then verify \eqref{jesse} in this case. 
\end{ex} 

It would be interesting to consider the full currents 
\begin{equation}\label{yoga}
D\varphi_1\cdots D\varphi_k R^E_k
\end{equation}
(and not only $D\varphi_1\cdots D\varphi_k R^E_{[k]}$) and investigate 
whether these capture algebraic or geometric information (in addition
to the fundamental cycle). 
If $(E, \varphi)$ is the Koszul complex of a holomorphic tuple $f$ it
was shown in \cite{ASWY} that the currents \eqref{yoga} satisfy a
generalized King's formula, generalizing \cite{ALelong}; in
particular, the Lelong numbers are 
the so-called \emph{Segre numbers} of the ideal generated by $f$. 

We remark that in the above example we do not know how to
interpret the current 
$D\varphi_1D\varphi_2 R^E_2$ or rather the part 
$D\varphi_1D\varphi_2\mu_2$. 
Below, however, we will consider an example where
$D\varphi_1D\varphi_2 R^E_2$ is a current of integration along the
(only) associated prime of codimension 2. 
For an ideal $\mathcal J$ over a local ring $R$, there is a notion of the
\emph{length along an associated prime}
$\mathfrak p$, defined as the length of the largest ideal in
$R_\mathfrak p/\mathcal JR_\mathfrak p$ of finite
length, see for example \cite{EH}*{Sect. II.3, p. 68}. 
The length of $\mathcal J$ along $\mathfrak p$ coincides with the geometric
multiplicity of $\mathcal J(\mathfrak p)$ in $\mathcal J$ if $\mathfrak p$ is a
minimal associated prime of $\mathcal J$. 
It would be interesting
to see whether these numbers could be recovered from the currents
\eqref{yoga}. However, in view of the example below this is not clear how to
do.

\begin{ex}
Let $Z$ be as in Example ~\ref{midair}. 
Then
\begin{equation*}
    0\to \Ok_{\C^2} \stackrel{\varphi_2}{\longrightarrow}
    \Ok_{\C^2}^{\oplus 2} \stackrel{\varphi_1}{\longrightarrow}
    \Ok_{\C^2} \to \Ok_Z, 
\end{equation*}
where
\begin{equation*}
    \{\varphi_2\} = \left[ \begin{array}{c} -x^\ell \\ y^{k-m} \end{array} \right] \text { and }
        \{\varphi_1\} = \left[ \begin{array}{cc} y^k & x^\ell
            y^m \end{array} \right], 
\end{equation*} 
is a free resolution of $\Ok_Z$. 
Note that, since $Z_\text{red}$ only has one irreducible component $\{y=0\}$ of
codimension $1$, $R^E_{[2]}=0$. 

Let $D$ be (induced by) the trivial connections on
$E_0=\Ok_{\C^2}$, $E_1=\Ok_{\C^2}^{\oplus 2}$, and $E_2=\Ok_{\C^2}$. 
Then a direct computation  yields 
\begin{equation*}
    \{D\varphi_1 D\varphi_2 \}= -\ell(2k-m) x^{\ell-1}y^{k-1} dx\wedge dy
    =: - C x^{\ell-1}y^{k-1} dx\wedge dy, 
\end{equation*}
where, as above, we have used the notation from Section
~\ref{matrisnot}. 
Next, let $(F,\psi)$ be the Koszul complex of $(y,x)$
and let
$a_0 : F_0 \to E_0$ be given by $\{a_0\} = \left[\begin{array}{c} x^{\ell-1}y^{k-1}\end{array}\right]$. 
Then 
$\{R^F_2\}=\dbar(1/x)\wedge \dbar(1/y)$
and $a_0$ can be extended to a morphism of complexes $a : (F,\psi) \to (E,\varphi)$,
where
\begin{equation*}
    \{a_2\} = \left[ \begin{array}{c} 1 \end{array} \right] \text { and }
        \{a_1\} = \left[ \begin{array}{cc} x^{\ell-1} & 0 \\ 0 & y^{k-m-1} \end{array} \right].
\end{equation*}
If we apply the comparison formula, \eqref{eq:comparisongeneral}, and
identify the components that takes values in $\Hom(F_0,E_2)$ we get
that 
\begin{equation*}
    R^E_2 a_0 - a_2 R^F_2 = \varphi_3 M_3 -\dbar M_2. 
\end{equation*}
Note that $M_3=0$ since $(E,\varphi)$ has length $2$. Moreover, since $Z^E_2 = Z^F_1 = \{ x = y = 0 \}$
has codimension $\geq 2$, $M_2 = 0$ by \cite{LComp}*{Proposition~3.5}. 
Hence $R^E_2 a_0 = a_2 R^F_2$. 
Thus, we get that
\begin{multline*}
\{D\varphi_1 D\varphi_2 R^E_2 \}= 
-C x^{\ell-1} y^{k-1} dx \wedge dy \{R^E_2 \}=
-C dx \wedge dy \{R^E_2 a_0\} = \\
-C dx \wedge dy \{a_2 R^F_2\} =
-C dx \wedge dy \wedge \dbar\frac{1}{x}\wedge \dbar \frac{1}{y}
 = (2\pi i)^2 C [0],
\end{multline*}
cf. \eqref{eqmatrixmultsign}. 

We conclude that 
\begin{equation}\label{poll}
D\varphi_1 D\varphi_2 R^E_2 = (2\pi i)^2 \ell(2k-m) [0],  
\end{equation}
i.e., $D\varphi_1 D\varphi_2 R^E_2$ is the current of integration
along the (only) associated prime $\mathfrak{m}_{\C^2,0} =
\mathcal{J}(x,y)$ of $\mathcal J$ with mass $(2\pi i)^2 \ell(2k-m)$. 
However, a computation yields that the length of $\mathcal J_0$ along
$\mathfrak{m}_{\C^2,0}$ equals $\ell(k-m)$; it it not clear to us how
to relate these numbers. 
\end{ex}

\section{Relation to the results of Lejeune-Jalabert} \label{sectlj}

Our results are closely related to results by Lejeune-Jalabert,
\cite{LJ1, LJ2}, and we will in this section compare
our results with hers.

Throughout this section, we let $Z$ be a (not necessarily reduced) analytic space of pure dimension $n$.
Assume that $Z$ is a subspace of codimension $p$ of the complex manifold $X$ of dimension $N = n+p$,
and let $Z$ be defined by the ideal sheaf $\mathcal{J} \subset \Ok_X$.

\subsection{The Grothendieck dualizing sheaf and residue currents} \label{ssectdualizing}

If $Z$ is Cohen-Macaulay, then the \emph{Grothendieck dualizing sheaf} $\omega_Z$ is
\begin{equation*}
    \omega_Z := \Ext^p_{\Ok_X}(\Ok_Z,\Omega_X^N),
\end{equation*}
where $\Omega_X^N$ is the sheaf of holomorphic $N$-forms on $X$.
If $Z$ is smooth, then $\omega_Z$ coincides with $\Omega_Z$.

One way of realizing $\omega_Z$ is as $H^p\big (\SHom(E_\bullet,\Omega^N_X)\big)$,
where $(E,\varphi)$ is a locally free resolution of $\Ok_Z$,
and another is as $H^p\big (\SHom(\Ok_Z,\mathcal{C}^{N,\bullet})\big )$,
where $(\mathcal{C}^{N,\bullet},\dbar)$ is the Dolbeault complex of $(N,\bullet)$-currents
on $X$.
There is a canonical isomorphism between these representations of $\omega_Z$,
\begin{equation} \label{eqdualisom}
    \res : H^p\big(\SHom(E_\bullet,\Omega^N_X)\big) \stackrel{\cong}{\to} H^p\big(\SHom(\Ok_Z,\mathcal{C}^{N,\bullet})\big),
\end{equation}
and by \cite{AndNoeth}*{Theorem~1.5 and Example~1}, this isomorphism can be realized concretely by the
residue current\footnote{We have introduced the factor $1/(2\pi i)^p$ for normalization reasons.} $R^E_p$:
\begin{equation} \label{eqdualisom2}
    \res : [\xi] \mapsto \left[\frac{1}{(2\pi i)^p} \xi R^E_p\tau^{-1} \right],
\end{equation}
where $\tau$ is the natural surjection $\tau : E_0 \to \coker \varphi_1 \cong \Ok_Z$ and we consider
$\xi R^E_p\tau^{-1}$ as a scalar current in a similar way as in the introduction.

\subsection{Coleff-Herrera currents}

A $(q,p)$-current $\mu$ on $X$ is a \emph{Coleff-Herrera current} on $Z_{\rm red}$, denoted $\mu \in \CH^q_{Z_{\rm red}}$,
if $\dbar \mu = 0$, $\overline{\psi} \mu = 0$ for all holomorphic functions $\psi$ vanishing
on $Z_{\rm red}$, and $\mu$ has the \emph{SEP with respect to} $Z_{\rm red}$, i.e.,
for any hypersurface $V$ of $Z_\text{red}$, the limit ${\bf 1}_V \mu := \lim_{\epsilon\to 0} (1-\chi(|f|/\epsilon)) \mu$ exists
and ${\bf 1}_V \mu = 0$,
where $f$ is a tuple of holomorphic functions defining $V$.
This description of Coleff-Herrera currents is due to Bj\"ork, see \cite{BjDmod}*{Chapter~3},
and \cite{BjAbel}*{Section~6.2}.

Let $\mathcal{G}$ be a coherent sheaf of codimension $p$, with a locally free resolution $(E,\varphi)$
of length $p$ (so that in particular, $\mathcal{G}$ is Cohen-Macaulay).
Then $R^E_p$ is a $\Hom(E_0,E_p)$-valued Coleff-Herrera current on $V := \supp \mathcal{G}$.
To see this, note first that, by the $\nabla$-closedness of $R^E$ and the fact that $E$ has length $p$,
$\dbar R^E_p = \varphi_{p+1} R^E_{p+1} = 0$. The fact that $R^E_p$ has the SEP follows from the dimension principle,
Proposition~\ref{proppmdim}. Moreover that $\overline{\psi} R^E_p = 0$ for any holomorphic function $\psi$ vanishing on $V$
follows from the fact that $R^E_p$ is a pseudomeromorphic current with support on $V$, see \cite{AW2}*{Proposition~2.3}.

We let $(\mathcal{C}^{N,\bullet}_{[Z_{\rm red}]},\dbar)$ denote the Dolbeault complex of $(N,\bullet)$-currents on $X$
with support on $Z_{\rm red}$.
It was proven in \cite{DS1} (for $Z_{\rm red}$ a complete intersection) and \cite{DS2}*{Proposition~5.2} (for $Z_{\rm red}$ arbitrary
of pure dimension) that Coleff-Herrera currents are canonical representatives in moderate cohomology in the sense that
\begin{equation*}
    \left(\ker \dbar : \mathcal{C}^{N,p}_{[Z_{\rm red}]} \to \mathcal{C}^{N,p+1}_{[Z_{\rm red}]}\right) \cong
    \CH^N_{Z_{\rm red}} \oplus \dbar \mathcal{C}^{N,p-1}_{[Z_{\rm red}]},
\end{equation*}
i.e., each cohomology class in $H^p\big
(\mathcal{C}^{N,\bullet}_{[Z_{\rm red}]}\big )$ has a unique representative
which is a Coleff-Herrera current.
In particular, 
\begin{equation} \label{eq:ch-uniqueness}
    \CH^N_{Z_{\rm red}} \cap \left(\im \dbar : \mathcal{C}^{N,p-1}_{[Z_{\rm red}]} \to \mathcal{C}^{N,p}_{[Z_{\rm red}]}\right) = \{ 0 \}.
\end{equation}

\subsection{Relation to the results in \cite{LJ1}}

In this section, we discuss how the results of Lejeune-Jalabert give our results and vice versa.
The main point is to describe how the result of \cite{LJ1} give the following special case of Theorem~\ref{thmmain2}.

\begin{thm} \label{thmcm}
    Let $Z \subset X$ be an analytic space of pure codimension $p$ which is Cohen-Macaulay.
    Assume that $\Ok_Z$ has a locally free resolution $(E,\varphi)$ over $\Ok_X$ of length $p$,
    and let $D$ be the connection on $\End E$ induced by connections on $E_0,\dots,E_p$.
    Then,
    \begin{equation} \label{eqcm1}
        \frac{1}{(2\pi i)^p p!} \tr(D\varphi_1 \cdots D\varphi_p R^E_p) = [Z],
    \end{equation}
    and
    \begin{equation} \label{eqcm2}
        \frac{1}{(2\pi i)^p p!} \tau D\varphi_1 \cdots D\varphi_p R^E_p \tau^{-1} = [Z],
    \end{equation}
    where $\tau$ is the natural surjection $\tau : E_0 \to \coker \varphi_1 \cong \Ok_Z$.
\end{thm}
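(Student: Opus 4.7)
\textbf{Proof plan for Theorem~\ref{thmcm}.}

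The plan is to derive Theorem~\ref{thmcm} from Lejeune-Jalabert's cohomological version of it by transporting her statement to the level of currents via the residue isomorphism $\res$ from \eqref{eqdualisom2}. Since $Z$ is Cohen-Macaulay and $(E,\varphi)$ has length $p$, the current $R^E_p$ is a $\Hom(E_0,E_p)$-valued Coleff-Herrera current on $Z_{\rm red}$, and $\res$ identifies $\omega_Z$ with $H^p\bigl(\SHom(\Ok_Z,\mathcal{C}^{N,\bullet})\bigr)$ through the rule $[\xi]\mapsto \bigl[(1/(2\pi i)^p)\,\xi R^E_p\tau^{-1}\bigr]$. By Lejeune-Jalabert's theorem in \cite{LJ1}, the fundamental class of $Z$ at each $\zeta\in Z$ is represented in $H^p\bigl(\SHom(E_\bullet,\Omega^N_X)\bigr)$ by the cocycle $(1/p!)\,D\varphi_1\cdots D\varphi_p$, suitably inserted into $\SHom(E_p,\Omega^N_X)$. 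Applying $\res$ to this representative yields that the current $\frac{1}{(2\pi i)^p p!}\,\tau D\varphi_1\cdots D\varphi_p R^E_p\tau^{-1}$ represents the fundamental class of $Z$ in the current realization of $\omega_Z$.

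On the other hand, by the very definition of the fundamental class in the dualizing sheaf, it is represented in $H^p\bigl(\SHom(\Ok_Z,\mathcal{C}^{N,\bullet})\bigr)$ by the integration current $[Z]$ (upgraded to an $(N,p)$-current by wedging with transverse holomorphic coordinates). Both representatives are $\dbar$-closed $(N,p)$-currents annihilated by $\mathcal{J}$ with support on $Z_{\rm red}$, hence Coleff-Herrera, and they lie in the same cohomology class. The uniqueness of Coleff-Herrera representatives in moderate cohomology, \eqref{eq:ch-uniqueness}, then forces them to coincide as currents, which is the content of \eqref{eqcm2}. Finally, \eqref{eqcm1} follows from \eqref{eqcm2} by \eqref{eqtraceidentification}, which asserts that the trace formulation and the $\tau$-contracted formulation agree for any free resolution of $\Ok_Z$.

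The main obstacle is the bidegree mismatch: Lejeune-Jalabert's cocycle $D\varphi_1\cdots D\varphi_p$ carries holomorphic form degree $p$, whereas the realization $\SHom(E_\bullet,\Omega^N_X)$ demands degree $N$, and similarly $[Z]$ is a $(p,p)$-current rather than an $(N,p)$-current. One therefore has to make precise how the cocycle is promoted to an element of $\SHom(E_p,\Omega^N_X)$ — concretely, by wedging with holomorphic coordinates provided by a local Noether normalization of $Z$ at a generic smooth point of $Z_{\rm red}$ — and to check that the class so obtained is indeed the abstract fundamental class $c_Z$, which corresponds under $\res$ to the integration current $[Z]$ similarly promoted to bidegree $(N,p)$. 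This bookkeeping is exactly what Lejeune-Jalabert's generalized residue pairing achieves; once it is in place the rest of the argument is essentially formal, reducing to one invocation of $\res$ and one appeal to Coleff-Herrera uniqueness.
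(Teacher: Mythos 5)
Your route is the one the paper itself sketches in Section~\ref{sectlj} as a \emph{discussion} of the relation to Lejeune-Jalabert, not the one it uses to prove the theorem. In the paper, Theorem~\ref{thmcm} is simply a special case of Theorem~\ref{thmmain2}, whose proof is self-contained: independence of the resolution and connection (Lemmas~\ref{lmaindep} and~\ref{lmaindep2}, via the comparison formula), reduction to generic smooth points of $Z_{\rm red}$ by the dimension principle, and an explicit computation with the universal Koszul resolution over the ring $A$ (Lemma~\ref{lma:mainKoszul} and Lemma~\ref{lmadphiapart}), which is where the geometric multiplicity $m$ actually appears, as $\tr\cl{1}=\rank_{\Ok_{X,\zeta}}A=m$.

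The gap in your argument is the step you dispose of with ``by the very definition of the fundamental class in the dualizing sheaf, it is represented \dots\ by the integration current $[Z]$.'' This is exactly the paper's assumption \eqref{eqfundassumption}, $\res c_Z(\alpha)=\tilde\alpha\wedge[Z]$, and it is not a definition but a nontrivial compatibility between Lejeune-Jalabert's algebraically defined class $c_Z$ and the Lelong current $\sum m_i[Z_i]$ weighted by the geometric multiplicities. The authors explicitly write that it is not clear to them how the fundamental class is defined in \cite{LJ1}, and they only \emph{deduce} \eqref{eqfundassumption} a~posteriori by combining Lejeune-Jalabert's theorem with their independently proved Theorem~\ref{thmcm}. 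Since matching the multiplicities $m_i$ is the entire content of the theorem, assuming this identification makes the argument circular; everything downstream of it (applying $\res$ as in \eqref{eqdualisom2}, noting that after choosing $D$ trivial via Lemma~\ref{lmaindep2} the current $\tau D\varphi_1\cdots D\varphi_p R^E_p\tau^{-1}$ lies in $\CH^p_{Z_{\rm red}}$, and invoking the uniqueness \eqref{eq:ch-uniqueness} and then \eqref{eqtraceidentification}) is fine and agrees with the paper's Section~\ref{sectlj}, but it cannot stand as a proof without an independent verification of \eqref{eqfundassumption} — which in the end requires essentially the same local computation at generic points of $Z_{\rm red}$ that the paper carries out in Lemma~\ref{lma:mainKoszul}.
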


In order to prove Theorem~\ref{thmmain2} in full generality,
without assuming that $Z$ is Cohen-Macaulay or that $(E,\varphi)$ has length $p$, one can then argue
in the same way as in our proof of Theorem~\ref{thmmain2}, 
but using Theorem~\ref{thmcm} instead of Lemma~\ref{lma:mainKoszul}. Indeed, first of all,
by \eqref{eqtraceidentification}, it is sufficient to prove just \eqref{eqmain2}.
By combining Lemma~\ref{lmaindep} and Theorem~\ref{thmcm}, we first obtain \eqref{eqmain2}
in a neighbourhood of each Cohen-Macaulay point.
By the dimension principle, \eqref{eqmain2} then holds on all of $X$.

\smallskip

In \cite{LJ1}, the fundamental class of $Z$ is considered as a map $c_Z : \Omega_Z^n \to \omega_Z$,
where $\Omega_Z^n$ is the sheaf of holomorphic $n$-forms on $Z$.
If $\alpha$ is a section of $\Omega_Z^n$ 
and $\tilde{\alpha}$ is a section of $\Omega_X^n$, which is a representative of $\alpha$, then
$\gamma := \tilde{\alpha} \wedge \tau D\varphi_1 \cdots D\varphi_p$ is a section of $\SHom(E_p,\Omega^N_X \otimes \Ok_Z)$.
Since $(E, \varphi)$ has length $p$, $\gamma$ induces a section $[\gamma]$ of $\Ext^p(\Ok_Z,\Omega_X^N\otimes \Ok_Z)$.
We now consider the isomorphism
\begin{equation} \label{eq:alj-isom}
    \omega_Z = \Ext^p(\Ok_Z,\Omega_X^N) \cong \Ext^p(\Ok_Z,\Omega_X^N\otimes \Ok_Z)
\end{equation}
induced by the surjection $\Omega_X^N \to \Omega_X^N \otimes \Ok_Z$,
see \cite{ALJ}*{Proposition~4.6}.
Since $E_p$ is locally free, $\gamma$ can locally be lifted to sections $\gamma_i$ of $\SHom(E_p,\Omega_X^N)$.
Since $(E,\varphi)$ has length $p$, these local liftings of $\gamma$ define sections $[\gamma_i]$ of $\omega_Z$
locally. On overlaps, the $\gamma_i$'s differ by sections of $\SHom(E_p,\Omega_X^N)\otimes \mathcal{J}$,
and since $\mathcal{J} \omega_Z = 0$, the sections $[\gamma_i]$ patch together to a global section
of $\omega_Z$, which we denote by $[\tilde \alpha \wedge \tau
D\varphi_1 \cdots D\varphi_p]$. 
By construction, $[\tilde \alpha \wedge \tau
D\varphi_1 \cdots D\varphi_p]$ maps to $[\gamma]$ using the isomorphism
\eqref{eq:alj-isom}. 
The main theorem in \cite{LJ1} asserts that this gives the fundamental
class of $\alpha$ (times $p!$), i.e.,
\begin{equation} \label{eqljmain}
    c_Z(\alpha) = \frac{1}{p!}[\tilde{\alpha} \wedge \tau D\varphi_1  \cdots  D\varphi_p].
\end{equation}

Note that where the local lifting $\gamma_i$ of $[\tilde{\alpha} \wedge \tau D\varphi_1  \cdots  D\varphi_p]$ is defined, $\gamma_i R^E_p$ coincides
with $\gamma R^E_p=\tilde{\alpha} \wedge \tau D\varphi_1\cdots D\varphi_p R^E_p$ (if
we consider the currents as scalar currents). 
Thus combining \eqref{eqljmain} with the realization
\eqref{eqdualisom2} of the isomorphism \eqref{eqdualisom}, we get that 
\begin{equation} \label{eqljmaincurrent}
    \res c_Z(\alpha) = \frac{1}{(2\pi i)^p p!}\tilde{\alpha} \wedge \tau D\varphi_1\cdots D\varphi_p R^E_p \tau^{-1} + \dbar \SHom(\Ok_Z,\mathcal{C}^{N,p-1}).
\end{equation}

It is not entirely clear to us how the fundamental
class is defined in \cite{LJ1}, but it is reasonable to assume that if one uses the isomorphism
\eqref{eqdualisom} to represent $c_Z(\alpha)$ as a current, then one should have
\begin{equation} \label{eqfundassumption}
    \res c_Z(\alpha) = \tilde{\alpha} \wedge [Z],
\end{equation}
where we by $[Z]$ mean the fundamental cycle (seen as a current on
$X$) as defined in \eqref{eqfundcycle}.
Since we have an independent proof of Theorem ~\ref{thmcm}
this assumption must indeed be correct, cf.\ the last paragraph
below. 
Note that since the right-hand side of \eqref{eqfundassumption} is a pseudomeromorphic $(p,p)$-current,
by the dimension principle, it is uniquely determined by its restriction to $Z_{\rm reg}$, and hence,
it is independent of the precise definition of $\Omega_Z^n$ as long as
the forms in $\Omega_Z^n$ coincide with regular holomorphic $n$-forms
on $Z_\text{reg}$ and can be lifted to holomorphic $n$-forms on
$X$. 

If we assume \eqref{eqfundassumption}, then \eqref{eqljmaincurrent} implies that
\begin{equation*}
    \mu := \tilde{\alpha} \wedge [Z] - \frac{1}{(2\pi i)^p p!}\tilde{\alpha} \wedge \tau D\varphi_1\cdots D\varphi_p R^E_p \tau^{-1} \in
    \left(\im \dbar : \mathcal{C}^{N,p-1}_{[Z_{\rm red}]} \to \mathcal{C}^{N,p}_{[Z_{\rm red}]}\right).
\end{equation*}
By Lemma~\ref{lmaindep2}, $\tau D\varphi_1 \cdots D\varphi_p R^E_p \tau^{-1}$ is independent of the connection $D$,
and we can thus assume that $D$ is the trivial connection $d$ in a trivialization of $E$.
Then $D\varphi_1 \cdots D\varphi_p$ is a holomorphic $\Hom(E_p,E_0)$-valued morphism, and thus, since $R^E_p$ is
a $\Hom(E_0,E_p)$-valued Coleff-Herrera current, $\tau D\varphi_1 \cdots D\varphi_p R^E_p \tau^{-1} \in \CH^p_{Z_{\rm red}}$.
Hence, $\mu \in \CH^N_{Z_{\rm red}}$, so by \eqref{eq:ch-uniqueness}, $\mu = 0$. Since $\mu = 0$ 
for any choice of the holomorphic $p$-form $\tilde{\alpha}$ on $X$, we get that \eqref{eqcm2} holds.
Finally, using \eqref{eqtraceidentification}, we get that \eqref{eqcm1} holds.
To conclude, assuming \eqref{eqfundassumption}, Theorem~\ref{thmcm} follows from the theorem
in \cite{LJ1}.

\medskip

On the other hand, Theorem~\ref{thmcm} together with \eqref{eqfundassumption} implies \eqref{eqljmaincurrent},
which in turn implies \eqref{eqljmain} since \eqref{eqdualisom2} is an isomorphism.
Thus, Lejeune-Jalabert's result follows from Theorem ~\ref{thmcm} and \eqref{eqfundassumption}.
Finally, taking Theorem~\ref{thmcm} and Lejeune-Jalabert's result for granted, it follows that
\eqref{eqfundassumption} must be a correct assumption.

\begin{bibdiv}
\begin{biblist}

\bib{ABM}{article}{
   author={Adamus, Janusz},
   author={Bierstone, Edward},
   author={Milman, Pierre D.},
   title={Geometric Auslander criterion for flatness},
   journal={Amer. J. Math.},
   volume={135},
   date={2013},
   number={1},
   pages={125--142},
}

\bib{ALelong}{article}{
   author={Andersson, Mats},
   title={Residues of holomorphic sections and Lelong currents},
   journal={Ark. Mat.},
   volume={43},
   date={2005},
   number={2},
   pages={201--219},
}

\bib{AndCH}{article}{
   author={Andersson, Mats},
   title={Uniqueness and factorization of Coleff-Herrera currents},
   journal={Ann. Fac. Sci. Toulouse Math.},
   volume={18},
   date={2009},
   number={4},
   pages={651--661},
}

\bib{AndNoeth}{article}{
   author={Andersson, Mats},
   title={Coleff-Herrera currents, duality, and Noetherian operators},
   journal={Bull. Soc. Math. France},
   volume={139},
   date={2011},
   number={4},
   pages={535--554},
}

\bib{AS}{article}{
   author={Andersson, Mats},
   author={Samuelsson, H\aa kan},
   title={A Dolbeault-Grothendieck lemma on complex spaces via Koppelman
   formulas},
   journal={Invent. Math.},
   volume={190},
   date={2012},
   number={2},
   pages={261--297},
}

\bib{ASWY}{article}{
   author={{A}ndersson, Mats},
   author={{S}amuelsson {}Kalm, H\aa kan},
   author={{W}ulcan, Elizabeth},
   author={{Y}ger, Alain},
   title={Segre numbers, a generalized King formula, and local intersections},
   JOURNAL = {J. Reine Angew. Math.},
    VOLUME = {728},
      YEAR = {2017},
     PAGES = {105--136},
       URL = {https://doi.org/10.1515/crelle-2014-0109},
}

\bib{AW1}{article}{
   author={Andersson, Mats},
   author={Wulcan, Elizabeth},
   title={Residue currents with prescribed annihilator ideals},
   journal={Ann. Sci. \'Ecole Norm. Sup.},
   volume={40},
   date={2007},
   number={6},
   pages={985--1007},
}

\bib{AW2}{article}{
   author={Andersson, Mats},
   author={Wulcan, Elizabeth},
   title={Decomposition of residue currents},
   journal={J. Reine Angew. Math.},
   volume={638},
   date={2010},
   pages={103--118},
}

\bib{AWSemester}{article}{
   author={Andersson, Mats},
   author={Wulcan, Elizabeth},
   title={Global effective versions of the Brian\c con–Skoda–Huneke
     theorem}, 
   journal={Invent. Math.}, 
   volume={200},
   date={2015},
   pages={607--651},
}

\bib{ALJ}{book}{
   author={Ang{\'e}niol, B.},
   author={Lejeune-Jalabert, M.},
   title={Calcul diff\'erentiel et classes caract\'eristiques en
   g\'eom\'etrie alg\'ebrique},
   series={Travaux en Cours [Works in Progress]},
   volume={38},
   publisher={Hermann, Paris},
   date={1989},
}

\bib{BjDmod}{article}{
   author={Bj{\"o}rk, Jan-Erik},
   title={$\scr D$-modules and residue currents on complex manifolds},
   status={Preprint, Stockholm},
   date={1996},
}

\bib{BjAbel}{article}{
   author={Bj{\"o}rk, Jan-Erik},
   title={Residues and $\scr D$-modules},
   conference={
      title={The legacy of Niels Henrik Abel},
   },
   book={
      publisher={Springer},
      place={Berlin},
   },
   date={2004},
   pages={605--651},
}

\bib{CH}{book}{
   author={Coleff, Nicolas R.},
   author={Herrera, Miguel E.},
   title={Les courants r\'esiduels associ\'es \`a une forme m\'eromorphe},
   series={Lecture Notes in Mathematics},
   volume={633},
   publisher={Springer},
   place={Berlin},
   date={1978},
}

\bib{DP}{article}{
   author={Demailly, Jean-Pierre},
   author={Passare, Mikael},
   title={Courants r\'esiduels et classe fondamentale},
   journal={Bull. Sci. Math.},
   volume={119},
   date={1995},
   number={1},
   pages={85--94},
}

\bib{DS1}{article}{
   author={Dickenstein, A.},
   author={Sessa, C.},
   title={Canonical representatives in moderate cohomology},
   journal={Invent. Math.},
   volume={80},
   date={1985},
   number={3},
   pages={417--434},
}

\bib{DS2}{article}{
   author={Dickenstein, Alicia},
   author={Sessa, Carmen},
   title={R\'esidus de formes m\'eromorphes et cohomologie mod\'er\'ee},
   conference={
      title={G\'eom\'etrie complexe},
      address={Paris},
      date={1992},
   },
   book={
      series={Actualit\'es Sci. Indust.},
      volume={1438},
      publisher={Hermann},
      place={Paris},
   },
   date={1996},
   pages={35--59},
}

\bib{Dol}{article}{
   author={Dolbeault, Pierre}, 
   title={Courants r\'esidus des formes semi-m\'eromorphes},
   conference={
      title={S\'eminaire Pierre Lelong (Analyse) (ann\'ee 1970)},
      address={},
      date={},
   },
   book={
        title={Lecture Notes in Math.},
        volume={205},
        publisher={Springer, Berlin},
   },
   date={1971},
   pages={56--70},
}

\bib{Eis}{book}{
   author={Eisenbud, David},
   title={Commutative algebra},
   series={Graduate Texts in Mathematics},
   volume={150},
   publisher={Springer-Verlag},
   place={New York},
   date={1995},
}

\bib{EH}{book}{
   author={Eisenbud, David},
   author={Harris, Joe},
   title={The geometry of schemes},
   series={Graduate Texts in Mathematics},
   volume={197},
   publisher={Springer-Verlag, New York},
   date={2000},
}

\bib{ERS}{article}{
   author={Eisenbud, David},
   author={Riemenschneider, Oswald},
   author={Schreyer, Frank-Olaf},
   title={Projective resolutions of Cohen-Macaulay algebras},
   journal={Math. Ann.},
   volume={257},
   date={1981},
   number={1},
   pages={85--98},
}

\bib{Fulton}{book}{
   author={Fulton, William},
   title={Intersection theory},
   series={Ergebnisse der Mathematik und ihrer Grenzgebiete. 3. Folge.},
   volume={2},
   edition={2},
   publisher={Springer-Verlag},
   place={Berlin},
   date={1998},
}

\bib{HL}{article}{ 
author={Herrera, Miguel E.\ M.},
   author={Lieberman, David I.},
   title={Residues and principal values on complex spaces}, 
   journal={Math. Ann.},
   volume={194},
   date={1971},
   pages={259--294},
}

\bib{L15}{article}{
   author={{L\"a}rk\"ang, Richard},
   title={Residue currents with prescribed annihilator ideals on singular varieties},
   journal={Math. Z.},
   volume={279},
   date={2015},
   number={1-2},
   pages={333--358},
}

\bib{LComp}{article}{
   author={{L\"a}rk\"ang, Richard},
   title={A comparison formula for residue currents},
   status={to appear},
   journal={Math. Scand.},
   eprint={arXiv:1207.1279v3 [math.CV]},
   url={http://arxiv.org/abs/1207.1279},
}

\bib{Lar15}{article}{
   author={{L\"a}rk\"ang, Richard},
   title={Explicit versions of the local duality theorem in $\C^n$},
   status={Preprint},
   date={2015},
   eprint={arXiv:1510.01965v2 [math.CV]},
   url={http://arxiv.org/abs/1510.01965},
}

\bib{LW1}{article}{
   author={L{\"a}rk{\"a}ng, Richard},
   author={Wulcan, Elizabeth},
   title={Computing residue currents of monomial ideals using comparison
   formulas},
   journal={Bull. Sci. Math.},
   volume={138},
   date={2014},
   number={3},
   pages={376--392},
}

\bib{LJ1}{article}{
   author={Lejeune-Jalabert, Monique},
   title={Remarque sur la classe fondamentale d'un cycle},
   journal={C. R. Acad. Sci. Paris S\'er. I Math.},
   volume={292},
   date={1981},
   number={17},
   pages={801--804},
}

\bib{LJ2}{article}{
   author={Lejeune-Jalabert, M.},
   title={Liaison et r\'esidu},
   conference={
      title={Algebraic geometry},
      address={La R\'abida},
      date={1981},
   },
   book={
      series={Lecture Notes in Math.},
      volume={961},
      publisher={Springer, Berlin},
   },
   date={1982},
   pages={233--240},
}

\bib{Lund1}{article}{
   author={{Lu}ndqvist, Johannes},
   title={A local Grothendieck duality theorem for Cohen-Macaulay ideals},
   journal={Math. Scand.},
   volume={111},
   date={2012},
   number={1},
   pages={42--52},
}

\bib{Lund2}{article}{
   author={{Lu}ndqvist, Johannes},
   title={A local duality principle for ideals of pure dimension}, 
   status={Preprint},
   date={2013},
   eprint={arXiv:1306.6252v1 [math.CV]},
   url={http://arxiv.org/abs/1306.6252},
}

\bib{PTY}{article}{
   author={Passare, Mikael},
   author={Tsikh, August},
   author={Yger, Alain},
   title={Residue currents of the Bochner-Martinelli type},
   journal={Publ. Mat.},
   volume={44},
   date={2000},
   number={1},
   pages={85--117},
}

\bib{SS}{article}{
   author={Scheja, G{\"u}nter},
   author={Storch, Uwe},
   title={Quasi-Frobenius-Algebren und lokal vollst\"andige Durchschnitte},
   journal={Manuscripta Math.},
   volume={19},
   date={1976},
   number={1},
   pages={75--104},
}

\bib{ST}{book}{
   author={Siu, Yum-Tong},
   author={Trautmann, G{\"u}nther},
   title={Gap-sheaves and extension of coherent analytic subsheaves},
   series={Lecture Notes in Mathematics, Vol. 172},
   publisher={Springer-Verlag, Berlin-New York},
   date={1971},
}

\bib{WLJ}{article}{
 AUTHOR = {Wulcan, Elizabeth},
     TITLE = {On a representation of the fundamental class of an ideal due
              to {L}ejeune-{J}alabert},
   JOURNAL = {Ann. Fac. Sci. Toulouse Math. (6)},
    VOLUME = {25},
      YEAR = {2016},
    NUMBER = {5},
     PAGES = {1051--1078},
      ISSN = {0240-2963},
       URL = {https://doi.org/10.5802/afst.1522},
}

\end{biblist}
\end{bibdiv}

\end{document}